\tikzset{labelsize/.style={font=\scriptsize}}
\newtheorem{theorem}{Theorem}[section]
\newtheorem{lemma}[theorem]{Lemma}
\newtheorem{proposition}[theorem]{Proposition}
\newtheorem{corollary}[theorem]{Corollary}
\theoremstyle{definition}
\newtheorem{definition}[theorem]{Definition}
\newtheorem{example}[theorem]{Example}
\theoremstyle{remark}
\newtheorem{remark}[theorem]{Remark}
\numberwithin{equation}{section}
\crefname{theorem}{Theorem}{Theorems}
\crefname{section}{Section}{Sections}
\crefname{subsection}{Subsection}{Subsections}
\crefname{definition}{Definition}{Definitions}
\crefname{notation}{Notation}{Notations}
\crefname{example}{Example}{Examples}
\crefname{remark}{Remark}{Remarks}
\crefname{equation}{}{}
\crefname{corollary}{Corollary}{Corollaries}
\crefname{proposition}{Proposition}{Propositions}
\crefname{lemma}{Lemma}{Lemmas}
\crefname{figure}{Figure}{Figures}
\newcommand{\Z}{\mathbb{Z}}
\newcommand{\R}{\mathbb{R}}
\newcommand{\Hom}{\mathrm{Hom}}
\newcommand{\Conn}{\mathrm{Conn}}
\newcommand{\Aut}{\mathrm{Aut}}
\newcommand{\norm}[1]{\left\lVert#1\right\rVert_1}
\title{Homotopy types of Hom complexes of graph homomorphisms whose codomains are cycles}
\author[S. Fujii]{Soichiro Fujii}
\address{School of Mathematical and Physical Sciences, Macquarie University, NSW 2109, Australia\vspace{-0.6em}}
\address{
Department of Mathematics and Statistics, Masaryk University, Kotl\'a\v{r}sk\'a 2, 611 37 Brno, Czech Republic}
\email{s.fujii.math@gmail.com}
\author[Y. Iwamasa]{Yuni Iwamasa}
\address{
Graduate School of Informatics,
Kyoto University \\
Yoshida Honmachi, Sakyo-ku, Kyoto, 606-8501 \\
Japan
}
\email{iwamasa@i.kyoto-u.ac.jp}
\author[K. Kimura]{Kei Kimura}
\address{
Faculty of Information Science and Electrical Engineering, Kyushu University \\ 744, Motooka, Nishi-ku, Fukuoka, 819-0395, Japan}
\email{kkimura@inf.kyushu-u.ac.jp}
\author[Y. Nozaki]{Yuta Nozaki}
\address{
Faculty of Environment and Information Sciences, Yokohama National University \\
79-7 Tokiwadai, Hodogaya-ku, Yokohama, 240-8501 \\
Japan\vspace{-0.6em}}
\address{
WPI-SKCM$^2$, Hiroshima University \\
1-3-1 Kagamiyama, Higashi-Hiroshima, Hiroshima, 739-8526 \\
Japan}
\email{nozaki-yuta-vn@ynu.ac.jp}
\author[A. Suzuki]{Akira Suzuki}
\address{
Center for Data-driven Science and Artificial Intelligence, Tohoku University \\
41 Kawauchi, Aoba-ku, Sendai, 980-8579 \\
Japan
}
\email{akira@tohoku.ac.jp}
\subjclass[2020]{Primary 55U05, 05C15, Secondary 55P15}
\keywords{Hom complex, graph coloring, homotopy type}
\begin{document}
\begin{abstract}
For simple graphs $G$ and $H$, the Hom complex $\mathrm{Hom}(G,H)$ is a polyhedral complex whose vertices are the graph homomorphisms $G\to H$ and whose edges connect the pairs of homomorphisms which differ in a single vertex of $G$. 
Hom complexes play an important role in an algebro-topological approach to the graph coloring problem.
It is known that $\mathrm{Hom}(G,H)$ is homotopy equivalent to a disjoint union of points and circles when both $G$ and $H$ are cycles.
We generalize this known result by showing that
the same holds whenever $G$ is connected and $H$ is a cycle.
To this end, we explicitly construct the universal cover of each connected component of $\mathrm{Hom}(G,H)$ and prove that it is contractible.
Additionally, we provide a simple criterion to determine whether the connected component containing a given homomorphism is homotopy equivalent to a point or circle.
\end{abstract}
\maketitle

\setcounter{tocdepth}{1}
\tableofcontents

%%%%%%%%%
\section{Introduction}
\label{sec:Introduction}
In this paper, by a \emph{graph} we mean an undirected simple graph, assumed to be finite unless otherwise stated.
For graphs $G$ and $H$, a \emph{graph homomorphism} $G\to H$ is a function $V(G)\to V(H)$ between the sets of vertices preserving the adjacency relation.
The \emph{Hom complex} $\Hom(G,H)$ (see, e.g.,~\cite[Section~9.2.3 and Chapter~18]{Koz08}) is a certain polyhedral complex having the homomorphisms $G\to H$ as the vertices.
Its $1$-skeleton $\Hom(G,H)^{(1)}$ is the graph with the homomorphisms $G\to H$ as the vertices, and in which two homomorphisms $f,g\colon G\to H$ are adjacent if and only if there exists $u\in V(G)$ such that $f(u)\neq g(u)$ and $f(v)=g(v)$ for all $v\in V(G)\setminus\{u\}$. 
The reachability problem in graphs of the form $\Hom(G,H)^{(1)}$ has been studied (in the context of combinatorial reconfiguration~\cite{ito2011}) from an algorithmic point of view by Wrochna~\cite{Wro20}, among others. The notion of a Hom complex has been generalized to directed graphs by Dochtermann and Singh~\cite{DoSi23}, who propose it as a natural framework for reconfiguration of homomorphisms of directed graphs.
%The reachability problem in graphs of the form $\Hom(G,H)^{(1)}$ has been studied from an algorithmic point of view by Wrochna~\cite{Wro20}, among others. 
%\textcolor{purple}{(in the context of combinatorial reconfiguration \cite{ito2011})} 
%\textcolor{red}{As a relevant study, Dochtermann and Singh~\cite{DoSi23} recently introduced a Hom complex for directed graphs as a natural framework for reconfiguration of homomorphisms of directed graphs.
%the reachability and path-connectivity in the context of combinatorial reconfiguration. 
%Our study was originally motivated by combinatorial reconfiguration as well.}

The graph homomorphisms from $G$ to the complete graph $K_k$ with $k$ vertices correspond to the vertex colorings of $G$ with $k$ colors, or the \emph{$k$-colorings} of $G$ for short. 
Since a homomorphism $G\to K_k$ induces a continuous map $\Hom(T,G)\to \Hom(T,K_k)$ for any graph $T$, topological invariants of $\Hom(T,G)$ and $\Hom(T,K_k)$ can be used to give an obstruction for the existence of $k$-colorings of $G$.
Lov\'{a}sz~\cite{Lov78} used the neighborhood complex of $G$, which is homotopy equivalent to $\Hom(K_2,G)$, to solve Kneser's conjecture about
the (non-)existence of $k$-colorings for a certain class of graphs called Kneser graphs.
Related results in this direction can be found in \cite{BaKo07} and \cite{Mat17JMSUT}, for example.

The homotopy type of $\Hom(G,H)$ has been determined in some cases.
For instance, Babson and Kozlov~\cite{BaKo06} show that $\Hom(K_n,K_k)$ is homotopy equivalent to a wedge of $(k-n)$-spheres whenever $n \leq k$ (otherwise, $\Hom(K_n,K_k)$ is empty).
\v{C}uki\'{c} and Kozlov~\cite{CuKo06} prove that $\Hom(C_n,C_k)$ is homotopy equivalent to a disjoint union of points and circles, where $C_k$ denotes the cycle with $k$ vertices for $k\geq 3$.
The main result of this paper is the following generalization of the above-mentioned result of \cite{CuKo06}.
%\cite[Proposition~4.5]{BaKo06}

\begin{theorem}
\label{thm:HomGCk}
Let $G$ be a connected graph and $k\geq 3$.
Then the Hom complex $\Hom(G,C_k)$ is homotopy equivalent to a disjoint union of points and circles.
\end{theorem}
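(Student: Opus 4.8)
The plan is to study each connected component of $\Hom(G,C_k)$ separately and to show that every such component is aspherical with fundamental group either trivial or infinite cyclic; by standard facts this forces it to be homotopy equivalent to a point or to $S^1$, which is exactly the theorem. Since $G$ and $C_k$ are finite, $\Hom(G,C_k)$ is a finite polyhedral complex, hence has finitely many components, each a connected CW complex (so locally nice enough to carry a universal cover). Fix such a component $\mathcal C$, a basepoint homomorphism $f_0\in\mathcal C$, and a vertex $r\in V(G)$; write $\Gamma=\pi_1(G)$ with universal cover $q\colon\widetilde G\to G$. Thinking of $C_k$ combinatorially as the circle $S^1$, every $f\in\mathcal C$ has a \emph{winding class} $\phi_f\in H^1(G;\Z)=\Hom(\Gamma,\Z)$, sending a cycle of $G$ to the net signed displacement of $f$ around it divided by $k$. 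A single-vertex modification of $f$ induces a homotopy of the realized maps $|G|\to|C_k|$ (fill the star of the modified vertex two ways; the difference is a $2$-sphere's worth, hence trivial), so $\phi_f=\phi_{f_0}=:\phi$ is constant on $\mathcal C$.

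Next I would build the universal cover of $\mathcal C$ explicitly by ``unrolling'' the target circle. Let $\rho\colon P_\infty\to C_k$ be the universal cover of $C_k$, the bi-infinite path on vertex set $\Z$ with $\rho$ the reduction modulo $k$; its deck group is generated by the shift $\sigma\colon n\mapsto n+k$. Lifting $f\circ q\colon\widetilde G\to C_k$ along $\rho$ — possible since $\widetilde G$ is simply connected — and pinning down the lift over a chosen preimage of $r$ produces, for each $f\in\mathcal C$ and more generally for each multihomomorphism (i.e.\ each cell), a homomorphism $\widetilde G\to P_\infty$ that is equivariant up to the twist by $\phi$; since $\phi$ kills $\ker\phi$, these maps descend along the cyclic cover $G_\phi\to G$ attached to $\ker\phi$. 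Collecting them yields a polyhedral complex $\widetilde{\mathcal C}$ — a connected component of the relevant subcomplex of $\Hom(G_\phi,P_\infty)$ — together with a cellular map $p\colon\widetilde{\mathcal C}\to\mathcal C$ induced by $\rho$ and $q$. I would then check that $p$ is a covering (using that $\rho$ is) and that its deck group is precisely the stabilizer of the component $\widetilde{\mathcal C}$ inside the shift group $\langle\sigma\rangle\cong\Z$ acting by postcomposition; being a subgroup of $\Z$, this group is either trivial or infinite cyclic.

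The heart of the argument is to prove that $\widetilde{\mathcal C}$ is contractible — this makes $p$ the universal covering and $\mathcal C$ aspherical. The key feature to exploit is that homomorphisms into the path $P_\infty$ are rigid: along every edge the value changes by exactly $\pm1$, so $P_\infty$ has none of the ``slack'' a cycle has. I expect to produce an explicit combinatorial deformation retraction of $\widetilde{\mathcal C}$ onto a single vertex — for instance by repeatedly pushing the lifted homomorphisms toward a canonical pointwise-minimal normal form, using a lattice-type property of the set of homomorphisms into $P_\infty$ to guarantee that the successive moves remain inside $\widetilde{\mathcal C}$ — or, equivalently, a discrete Morse function with a unique critical cell. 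Making such a deformation well defined on the infinite complex $\widetilde{\mathcal C}$, and controlling the higher-dimensional cells, is the step I expect to be the main obstacle. The case $k=4$ will likely need separate treatment, since $C_4=K_{2,2}$ has pairs of non-adjacent vertices with two common neighbours and hence admits higher-dimensional cells that are absent for $k\ge5$; the case $k=3$, where $C_3=K_3$, may also deserve its own look.

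Granting contractibility of $\widetilde{\mathcal C}$, the component $\mathcal C$ is a $K(\pi,1)$ with $\pi=\pi_1(\mathcal C)$ equal to the deck group of $p$, so $\mathcal C\simeq\ast$ when $\pi$ is trivial and $\mathcal C\simeq K(\Z,1)\simeq S^1$ when $\pi\cong\Z$, which proves the theorem. Finally, to extract the promised criterion, I would rephrase ``$\pi\cong\Z$'' as ``some nonzero power $\sigma^m$ preserves $\widetilde{\mathcal C}$'', i.e.\ as ``$f_0$ can be reconfigured, inside $\mathcal C$, to a once-more-wound copy of itself''; tracing this through the unrolling construction should reduce it to a short inequality comparing the combinatorics of $G$ (cycle lengths and distances) with $k$ and the winding class $\phi$, which is the desired test for whether the component of a given homomorphism is a point or a circle.
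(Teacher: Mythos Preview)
Your overall strategy---construct the universal cover of each component by unrolling $C_k$ to the infinite path, show it is contractible, and identify $\pi_1$ with a subgroup of the shift group $\Z$---is exactly the paper's. The paper also singles out $k=4$ (handled via the folding theorem, reducing $\Hom(G,C_4)$ to $\Hom(G,K_2)$); by contrast $k=3$ needs no special treatment, since the monochromatic neighbourhood property already holds for all $k\neq 4$, so your worry there is unfounded.

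Where the implementations differ is in the model for the cover. You lift through $\widetilde G$ and then descend to the cyclic cover $G_\phi$, working with equivariant maps $G_\phi\to P_\infty$; when $\phi\neq 0$ this means an infinite source graph and an equivariance constraint to carry around. The paper bypasses covers of $G$ entirely. Fixing $f$, it records for each $v\in V(G)$ an integer $a_v$ (the signed count of ``$+2$'' recolourings at $v$) and takes the induced cube subcomplex $D_f\subseteq\R^{V(G)}$ cut out by $a_v\leq a_u\leq a_v+1$ over the arcs $(u,v)$ of the orientation $\overrightarrow{G}_f$ that $f$ puts on $G$. The diagonal $\Z$-translation plays the role of your shift $\sigma$, and one shows directly that $D_f/k'\Z$ embeds in $\Hom(G,C_k)$, so the deck group really is a subgroup of $\Z$. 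This concrete finite-dimensional model also makes your acknowledged main obstacle routine: contractibility of the relevant component $E_f=\Conn(D_f,\bm 0)$ comes from collapsing the filtration by $\norm{\bm a}$ one level at a time, the key lemma being that at each outermost vertex $\bm a$ the coordinates $v$ with $\bm a\pm\bm e_v\in D_f^{(0)}$ and $\norm{\bm a\pm\bm e_v}<\norm{\bm a}$ index an \emph{independent} set of $G$, so there is a unique maximal cube through $\bm a$ pointing inward.

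Your guess about the final criterion is off. It is not an inequality in cycle lengths, distances, $k$, and $\phi$; it is simply whether the orientation $\overrightarrow{G}_f$ contains a directed cycle---the component is contractible if yes and homotopy equivalent to $S^1$ if no. This is strictly finer than ``$\phi\neq 0$'': for instance $f\colon C_5\to C_3$ with $f(0,1,2,3,4)=(0,1,2,0,1)$ has winding number $1$, yet $\overrightarrow{G}_f$ is acyclic (vertex $4$ is a sink), so that component is a circle. In the paper's model the criterion drops out immediately: a vertex on a directed cycle of $\overrightarrow{G}_f$ has its $a_v$-coordinate pinned to $0$ throughout $E_f$, which kills the diagonal $\Z$-action and makes the covering map a homeomorphism.
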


Combining \cref{thm:HomGCk} with the property $\Hom(G\sqcup G',H)\cong \Hom(G,H)\times \Hom(G',H)$ \cite[Section~2.4]{BaKo06}, we obtain the following consequence.

\begin{corollary}
Let $G$ be a \textup{(}not necessarily connected\textup{)} graph and $k\geq 3$.
Then the Hom complex $\Hom(G,C_k)$ is homotopy equivalent to a disjoint union of products of circles.\footnote{Note that the product $(S^1)^d$ of $d$ circles is a $d$-dimensional torus. When $d=0$, we define $(S^1)^0$ as a point.}
\end{corollary}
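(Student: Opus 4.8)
The plan is to reduce the statement to \cref{thm:HomGCk} using the product formula $\Hom(G\sqcup G',H)\cong \Hom(G,H)\times\Hom(G',H)$ from \cite[Section~2.4]{BaKo06}. First I would decompose $G$ into its connected components $G=G_1\sqcup\dots\sqcup G_m$. Applying the product formula $m-1$ times yields an isomorphism of polyhedral complexes $\Hom(G,C_k)\cong\Hom(G_1,C_k)\times\dots\times\Hom(G_m,C_k)$, so it is enough to analyze this finite product up to homotopy.

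Next I would invoke \cref{thm:HomGCk} to express each factor $\Hom(G_i,C_k)$, up to homotopy equivalence, as a disjoint union $\bigsqcup_{j\in J_i}X_{i,j}$ in which every $X_{i,j}$ is a point or a circle. Since homotopy equivalences are closed under finite products, $\Hom(G,C_k)$ is homotopy equivalent to $\prod_{i=1}^{m}\bigl(\bigsqcup_{j\in J_i}X_{i,j}\bigr)$. The distributive law for Cartesian products over disjoint unions rewrites this as $\bigsqcup_{(j_1,\dots,j_m)\in J_1\times\dots\times J_m}\bigl(X_{1,j_1}\times\dots\times X_{m,j_m}\bigr)$. (If some $J_i$ is empty — which happens exactly when $G_i$, hence $G$, admits no homomorphism to $C_k$ — the index set is empty and $\Hom(G,C_k)$ is the empty complex, still trivially a disjoint union of products of circles.)

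Finally I would identify each summand $X_{1,j_1}\times\dots\times X_{m,j_m}$: it is a finite product of points and circles, and deleting the point factors (which are units for the product) leaves $(S^1)^d$ for some $d\ge 0$, with $(S^1)^0$ a point. Hence $\Hom(G,C_k)$ is homotopy equivalent to a disjoint union of tori of various dimensions, which is exactly the assertion. The only substantial input is \cref{thm:HomGCk}; the remaining manipulations of finite products and disjoint unions are routine, and the one point deserving care is that homotopy equivalence is preserved by finite products, which is standard (a product of homotopy inverses is a homotopy inverse of the product map).
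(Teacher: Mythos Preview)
Your proposal is correct and follows exactly the approach the paper indicates: the paper derives the corollary by combining \cref{thm:HomGCk} with the product formula $\Hom(G\sqcup G',H)\cong \Hom(G,H)\times \Hom(G',H)$, and you have simply spelled out the routine details of that combination (distributing the product over disjoint unions and simplifying each summand to a torus).
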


As in \cite{CuKo06}, we take different approaches to proving \cref{thm:HomGCk}, depending on whether $k=4$ or $k\neq 4$. When $k=4$, we can use the folding theorem \cite[Theorem~18.22]{Koz08}; in this case, $\Hom(G,C_4)$ is homotopy equivalent to $\Hom(G,K_2)$, and thus each connected component turns out to be contractible.

When $k\neq 4$, the homomorphisms $G\to C_k$ have the ``monochromatic neighborhood property'' \cite{Wro20,LMS25}: for any pair of homomorphisms $f,g\colon G\to C_k$ which are adjacent in $\Hom(G,C_k)^{(1)}$, if $u\in V(G)$ is the vertex with $f(u)\neq g(u)$, then all vertices of $G$ adjacent to $u$ take the same value under $f$ (or equivalently, under $g$); see \cref{prop:MNP}. Using this, we can describe the universal cover $p\colon \widetilde X\to X$ of each connected component $X$ of $\Hom(G,C_k)$ fairly explicitly, and show that $\widetilde X$ is contractible. 
This implies that $X$ is an Eilenberg--MacLane space $K\bigl(\pi_1(X),1\bigr)$ (see, e.g.,~\cite[Section~1.B]{Hat02}), where $\pi_1(X)$ is the fundamental group of $X$. 
We show that $\pi_1(X)$ is either trivial or is isomorphic to the additive group $\Z$ of integers, therefore showing (by the uniqueness of the homotopy type of an Eilenberg--MacLane space) that $X$ is either contractible or is homotopy equivalent to the circle.
Our calculation of $\pi_1(X)$ also takes advantage of the explicit description of the universal cover $p\colon \widetilde X\to X$: since $\pi_1(X)$ is isomorphic to the automorphism group $\Aut(p)$ of the universal cover $p$ of $X$, we calculate the latter.

\subsection*{Related work}
A few months after the first arXiv version of this paper had been made public, Fujii--Kimura--Nozaki \cite{FKN25} and Matsushita \cite{Mat25} have independently proved the following result.\footnote{The fact that \cite{FKN25} was being developed was mentioned in the first arXiv version of this paper.} 
Let $G$ be a connected graph and $H$ a square-free graph, i.e., a graph not containing $C_4$ as a subgraph. 
Then, each connected component of $\Hom(G,H)$ is homotopy equivalent to a point, a circle, or (the geometric realization of) a connected component of $H \times K_2$ (\cite[Theorem~1.1]{FKN25} and \cite[Theorem~1.2]{Mat25}). 
In particular, since a connected component of $C_k \times K_2$ ($k \geq 3$) is also a cycle (more precisely, it is $C_k$ if $k$ is even and $C_{2k}$ if $k$ is odd), their results imply \cref{thm:HomGCk} on $\Hom(G,C_k)$ with $k \neq 4$. 
Moreover, the high-level strategies of this paper and \cite{FKN25,Mat25} are similar, in the sense that 
they all construct the universal cover $p\colon \widetilde X\to X$ of each connected component $X$ of $\Hom(G,H)$ and show that $\widetilde X$ is contractible.
% , in the sense that they all take the following steps to determine the homotopy type of each connected component of $\Hom(G,H)$.
% \begin{enumerate}[label={(\arabic*)}]
%     \item Construct the universal cover $p \colon \widetilde X \to X$ of a given connected component $X$ of $\Hom(G,H)$.
%     \item Show that $\widetilde X$ is contractible. 
%     \item Determine the fundamental group of $X$.
% \end{enumerate}

However, there are a few notable differences between this paper and \cite{FKN25,Mat25}. 
First, in this paper, we construct $\widetilde X$ rather concretely as a suitable induced cube subcomplex of $\R^n$ (see \cref{def:G-L} and \cref{prop:E-alpha-as-Conn}). 
Such a concrete construction does not seem possible in the general setting of \cite{FKN25,Mat25}, and the authors there construct it more abstractly (using poset topology).
As a result, the details of the proofs in this paper differ significantly from those in \cite{FKN25,Mat25}.
In particular, unlike \cite{FKN25,Mat25}, this paper does not rely on the theory of poset topology.
Second, as mentioned in \cite[Related work]{FKN25}, the algorithm to determine the homotopy type of a given connected component of $\Hom(G,H)$ given in this paper when $H=C_k$ (\cref{thm:homotopy-type-of-Hom}) is far simpler than its counterpart for a general square-free $H$, which is essentially the algorithm of Wrochna \cite{Wro20}; see \cite[Remark~6.9]{FKN25} or \cite[Corollary~5.6]{Mat25}.
Accordingly, \cref{thm:homotopy-type-of-Hom}, which sharpens \cref{thm:HomGCk}, is \emph{not} encompassed by the generalization in \cite{FKN25,Mat25}.

Therefore, this paper provides a concrete and more accessible analysis of $\Hom(G,H)$ when $H$ is a cycle, 
and also demonstrates that an algorithm for determining the homotopy type admits a substantial simplification in this case.

\subsection*{Outline of the paper}
In \cref{sec:Preliminaries}, after recalling relevant background information and introducing key definitions in this paper, we treat the special cases of \cref{thm:HomGCk} where $k=4$ (\cref{prop:square}) or where $G$ consists of a single vertex (\cref{prop:isolated}). The rest of this paper deals with the case where $G$ is a connected graph with at least two vertices and where the integer $k\geq 3$ satisfies $k\neq 4$. We fix an arbitrary homomorphism $f\colon G\to C_k$; our aim is to determine the homotopy type of the connected component $\Conn\bigl(\Hom(G,H),f\bigr)$ of $\Hom(G,H)$ containing $f$.
In \cref{sec:Construction}, we construct a covering map $p_f\colon E_f\to \Conn\bigl(\Hom(G,H),f\bigr)$. In \cref{sec:Contractibility}, we show that $E_f$ is contractible. This in particular implies that $E_f$ is simply-connected, i.e., that $p_f\colon E_f\to \Conn\bigl(\Hom(G,H),f\bigr)$ is the universal cover of $\Conn\bigl(\Hom(G,H),f\bigr)$. 
Finally, in \cref{sec:Determining}, we determine the homotopy type of $\Conn\bigl(\Hom(G,H),f\bigr)$ by calculating the fundamental group of $\Conn\bigl(\Hom(G,H),f\bigr)$, or equivalently, the automorphism group $\Aut(p_f)$ of the universal cover $p_f$ (\cref{thm:homotopy-type-of-Hom}).

%%%
\subsection*{Acknowledgments}
The authors thank Takahiro Matsushita for the helpful discussion.
The authors are grateful to the anonymous referees for a careful reading of the manuscript and for helpful comments.
This study was supported in part by JSPS KAKENHI Grant Numbers JP20K14317, JP21K17700, JP22K17854, JP23K12974, JP24H00686, JP24K02901, JP24K21315, JP25K14980, JSPS Overseas Research Fellowships, and JST ERATO Grant Number JPMJER2301, Japan.

%%%%%%%
\section{Preliminaries}
\label{sec:Preliminaries}

For a graph $G$, its set of vertices is denoted by $V(G)$ and its set of edges by $E(G)$; the latter is a subset of the set of all two-element subsets of $V(G)$, whose element $\{u,v\}$ we write as $uv$. 
(Recall that by a graph we mean an undirected simple graph.)
Two vertices $u,v\in V(G)$ of $G$ are \emph{adjacent} if $uv\in E(G)$. A vertex of $G$ is \emph{isolated} if it is not adjacent to any vertex of $G$. A graph $G$ is \emph{finite} if $V(G)$ is a finite set. We assume that all graphs are finite unless otherwise stated. 

For a digraph $\overrightarrow{G}$, its set of vertices is denoted by $V(\overrightarrow{G})$ and its set of arcs by $A(\overrightarrow{G})$, the latter being a subset of $V(\overrightarrow{G})^2$.
A vertex $u\in V(\overrightarrow{G})$ is called a \emph{source} in $\overrightarrow{G}$ if there is no vertex $v\in V(\overrightarrow{G})$ such that $(v,u) \in A(\overrightarrow{G})$, and a \emph{sink} in $\overrightarrow{G}$ if there is no vertex $v\in V(\overrightarrow{G})$ such that $(u,v) \in A(\overrightarrow{G})$.

Let $G$ and $H$ be possibly infinite graphs. A \emph{graph homomorphism} $f\colon G\to H$ is a function $f\colon V(G)\to V(H)$ such that, for any pair of vertices $u,v\in V(G)$, if $u$ and $v$ are adjacent in $G$, then so are $f(u)$ and $f(v)$ in $H$.  
A graph homomorphism $f\colon G\to H$ (between possibly infinite graphs) is said to have
\begin{itemize}
    \item the \emph{edge-lifting property} if for each vertex $u\in V(G)$ and each vertex $y\in V(H)$ such that $f(u)$ and $y$ are adjacent in $H$, there exists a vertex $v\in V(G)$ such that $u$ and $v$ are adjacent in $G$ and $f(v)=y$, and
    \item the \emph{unique edge-lifting property} if for each vertex $u\in V(G)$ and each vertex $y\in V(H)$ such that $f(u)$ and $y$ are adjacent in $H$, there exists a \emph{unique} vertex $v\in V(G)$ such that $u$ and $v$ are adjacent in $G$ and $f(v)=y$.
\end{itemize} 

The Hom complex $\Hom(G,H)$ for graphs $G$ and $H$ is a certain topological space defined in, e.g., \cite[Section~9.2.3]{Koz08}.
For our purposes, it is convenient to describe $\Hom(G,H)$ using the fact that it is a \emph{flag prodsimplicial complex}. 
A \emph{prodsimplicial complex} (cf.~\cite[Definition~2.43]{Koz08}) is a regular CW complex $K$ whose face poset $\mathcal{F}(K)$ has the following property: for any cell $C$ in the complex $K$, the subposet of $\mathcal{F}(K)$ consisting of all faces of $C$ is isomorphic to the face poset of a product $\Delta^{d_1}\times\dots\times\Delta^{d_m}$ of simplices (with the usual faces); we say that $\Delta^{d_1}\times\dots\times\Delta^{d_m}$ is the \emph{shape} of $C$. (See \cite[Section~3]{Bjo84} for the definitions of regular CW complex and its face poset.)
Elements of the $0$-skeleton $K^{(0)}$ of a prodsimplicial complex $K$ are called \emph{vertices} of $K$.
A prodsimplicial complex $K$ is \emph{flag} if it has the following property, which implies that $K$ is completely determined by its $1$-skeleton $K^{(1)}$: for any product of simplices $\Delta^{d_1}\times\dots\times\Delta^{d_m}$, the cells $C$ in $K$ of this shape bijectively correspond to the induced subgraphs of $K^{(1)}$ isomorphic to the $1$-skeleton $(\Delta^{d_1}\times\dots\times\Delta^{d_m})^{(1)}$ of $\Delta^{d_1}\times\dots\times\Delta^{d_m}$, under the map $C\mapsto C^{(1)}$.
The Hom complex $\Hom(G,H)$ is a flag prodsimplicial complex \cite[Proposition~8.18]{Koz08}, and hence it is determined by the graph $\Hom(G,H)^{(1)}$ (described in the first paragraph of \cref{sec:Introduction}) in the above manner. 
Another way to define $\Hom(G,H)$ is to define its face poset $\mathcal{F}\bigl(\Hom(G,H)\bigr)$, since a regular CW complex can be reconstructed up to homeomorphism from its face poset \cite[Section~3]{Bjo84}. Although it is easy to describe $\mathcal{F}\bigl(\Hom(G,H)\bigr)$ explicitly (see, e.g.,~\cite[Definition~1.2]{BaKo06}), we will not use this point of view in this paper.
See \cite[Chapter~18]{Koz08} or \cite[Section~2]{BaKo06} for several examples of Hom complexes.

A \emph{cube complex} is a prodsimplicial complex $K$ such that, for each cell $C$ in $K$, the shape of $C$ is a hypercube $\Delta^1\times\dots \times \Delta^1$. Given a cube complex $K$ and a subset $S^{(0)}\subseteq K^{(0)}$ of its $0$-skeleton, the \emph{induced cube subcomplex} $S$ of $K$ determined by $S^{(0)}$ is the cube subcomplex of $K$ whose $0$-skeleton is $S^{(0)}$ and whose cells are all cells $C$ in $K$ such that $C\cap K^{(0)}\subseteq S^{(0)}$. (This notion makes sense more generally for arbitrary prodsimplicial complexes, for example, but we will use it only for cube complexes.)

Given a topological space $X$ and a point $a\in X$, we denote by $\Conn(X,a)$ the connected component of $X$ containing $a$.

For each nonnegative integer $k$, we have a canonical surjection $\Z\to \Z/k\Z$ mapping each integer $m$ to the element $[m]_k$ of $\Z/k\Z$ represented by $m$.
For $k\geq 3$, we identify the set of vertices of the $k$-cycle graph $C_k$ with $\Z/k\Z=\{[0]_k,[1]_k,\dots, [k-1]_k\}$, in such a way that $x,y\in \Z/k\Z$ are adjacent if and only if $y-x\in\{\pm [1]_k\}$. 
As in this last statement, we often perform arithmetical operations on the vertices of $C_k$, using the fact that $\Z/k\Z$ is an abelian group under addition. 

The crucial feature of the $k$-cycle $C_k$ with $k\neq 4$ is that the homomorphisms to $C_k$ have the \emph{monochromatic neighborhood property} (cf.~\cite{Wro20,LMS25}), mentioned in \cref{sec:Introduction}. Here is a more precise statement.
Recall that two homomorphisms $f,g\colon G\to H$ are adjacent if there exists $u\in V(G)$ such that $f(u)\neq g(u)$ and $f(v)=g(v)$ for all $v\in V(G)\setminus\{u\}$.

\begin{proposition}
\label{prop:MNP}
    Let $G$ be a graph and $f,g\colon G\to C_k$ be adjacent homomorphisms, where $k\geq 3$ and $k\neq 4$. Let $u\in V(G)$ be the vertex with $f(u)\neq g(u)$. Then, either of the following holds. 
    \begin{enumerate}[label=\emph{(\arabic*)}]
        \item For each vertex $v\in V(G)$ adjacent to $u$, we have $f(u)+[1]_k=f(v)=g(v)=g(u)-[1]_k$. 
        \item For each vertex $v\in V(G)$ adjacent to $u$, we have $f(u)-[1]_k=f(v)=g(v)=g(u)+[1]_k$. 
    \end{enumerate}
\end{proposition}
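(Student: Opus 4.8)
The plan is to reduce the statement to a short case analysis on the two vertices $f(u)$ and $g(u)$ of $C_k$. Write $a=f(u)$ and $b=g(u)$; by hypothesis $a\neq b$. If $u$ is isolated in $G$, both alternatives (1) and (2) hold vacuously, so I may assume that $u$ has at least one neighbor.

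First I would record the constraints coming from the two homomorphisms. For every vertex $v\in V(G)$ adjacent to $u$, the fact that $f$ is a homomorphism gives $f(v)\in\{a+[1]_k,\,a-[1]_k\}$, and the fact that $g$ is a homomorphism, together with $f(v)=g(v)$, gives $f(v)\in\{b+[1]_k,\,b-[1]_k\}$. Hence $f(v)$ lies in the intersection $S:=\{a+[1]_k,\,a-[1]_k\}\cap\{b+[1]_k,\,b-[1]_k\}$; in particular $S\neq\emptyset$, since $u$ has a neighbor.

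Next I would identify $S$ exactly by comparing the four possible equalities between an element of $\{a\pm[1]_k\}$ and an element of $\{b\pm[1]_k\}$, using $a\neq b$: the element $a+[1]_k$ can equal $b+[1]_k$ or $b-[1]_k$ only when $b=a+[2]_k$, and $a-[1]_k$ only when $b=a-[2]_k$. Moreover $b=a+[2]_k$ and $b=a-[2]_k$ cannot both hold, since that would force $[4]_k=[0]_k$, i.e.\ $k\mid 4$, hence $k=4$ (as $k\geq 3$), contradicting the hypothesis $k\neq 4$. Since $S$ is nonempty, exactly one of $b=a+[2]_k$ and $b=a-[2]_k$ holds, and correspondingly $S=\{a+[1]_k\}$ or $S=\{a-[1]_k\}$.

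Finally, in the case $b=a+[2]_k$ every neighbor $v$ of $u$ satisfies $f(v)=a+[1]_k$, and then $f(u)+[1]_k=a+[1]_k=f(v)=g(v)$ while $g(u)-[1]_k=b-[1]_k=a+[1]_k$, which is alternative (1); the case $b=a-[2]_k$ is symmetric and yields alternative (2). The whole argument is elementary modular arithmetic in $\Z/k\Z$; the only delicate point—and the sole place where the hypothesis $k\neq 4$ enters—is ruling out the degenerate possibility that $S$ contains both $a+[1]_k$ and $a-[1]_k$, which I expect to be the one step a reader should check carefully.
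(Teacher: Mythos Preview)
Your argument is correct. The paper does not actually supply a proof of this proposition; it states the result (attributing the ``monochromatic neighborhood property'' to \cite{Wro20,LMS25}) and immediately proceeds to use it, so there is nothing to compare your approach against beyond noting that your elementary case analysis in $\Z/k\Z$ is exactly the expected verification. One phrasing to tighten: the sentence ``the element $a+[1]_k$ can equal $b+[1]_k$ or $b-[1]_k$ only when $b=a+[2]_k$'' conflates two cases---you should say that $a+[1]_k=b+[1]_k$ forces $a=b$ (excluded), while $a+[1]_k=b-[1]_k$ forces $b=a+[2]_k$, so $a+[1]_k\in S$ iff $b=a+[2]_k$; the logic is fine but the wording obscures it.
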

In particular, if $G$ has no isolated vertices, then each pair $(f,g)$ of adjacent homomorphisms $G\to C_k$ falls into precisely one of the cases (1) or (2) of \cref{prop:MNP}; we say that the (ordered) pair $(f,g)$ is of \emph{positive type} if (1) holds, and is of \emph{negative type} if (2) holds.

\cref{prop:MNP} gives us the following insight into the graph $\Hom(G,C_k)^{(1)}$, where $G$ is a graph without isolated vertices and $k\geq 3$ satisfies $k\neq 4$. Given any walk $W=(f_0,f_1,\dots,f_\ell)$ in $\Hom(G,C_k)^{(1)}$, we can assign to each vertex $u\in V(G)$ an integer $a^{(W)}_u$ defined as follows:
\begin{multline*}
    a^{(W)}_u=\bigl|\bigl\{\,i\in \{1,\dots, \ell\} \mid \text{$f_{i-1}(u)\neq f_i(u)$ and $(f_{i-1},f_i)$ is of positive type}\,\bigr\}\bigr|\\
    - \bigl|\bigl\{\,i\in \{1,\dots, \ell\} \mid \text{$f_{i-1}(u)\neq f_i(u)$ and $(f_{i-1},f_i)$ is of negative type}\,\bigr\}\bigr|.
\end{multline*}
Intuitively, $a^{(W)}_u$ denotes the (signed) number of times the value at $u$ has changed during $W$. 
It is easy to see that the element $\bm a^{(W)}=(a_v^{(W)})_{v\in V(G)}\in \Z^{V(G)}$ thus obtained from $W$ has to satisfy a few conditions, expressible in terms of the initial homomorphism $f_0$. For example, if we have $uv\in E(G)$ with $f_0(v)-f_0(u)=[1]_k$, then the inequalities $a_v^{(W)}\leq a_u^{(W)}\leq a_v^{(W)} +1$ have to hold. The following definitions are motivated by the above observation.

\begin{definition}
\label{def:G-L}
    Let $G$ be a graph, $k\geq 3$, and $f\colon G\to C_k$ a homomorphism.
    \begin{enumerate}[label={(\arabic*)}]
        \item Define the digraph $\overrightarrow{G}_f$ by $V(\overrightarrow{G}_f)=V(G)$ and 
\[
A(\overrightarrow{G}_f)=\{\,(u,v)\in V(G)^2 \mid \text{$uv\in E(G)$ and $f(v)-f(u)=[1]_k$}\,\}.
\]
We remark that $\overrightarrow{G}_f$ is obtained from the graph $G$ by choosing a direction of each edge. (That is, $\overrightarrow{G}_f$ is an \emph{orientation} of $G$ in the sense of \cite[Section~1.10]{Die17}.) 
See \cref{fig:G-f} for an example of $\overrightarrow{G}_f$.
        \item Define the subset $D_f^{(0)}\subseteq \Z^{V(G)}$ by
\begin{align*}
D_f^{(0)} &=\{\,(a_v)_{v\in V(G)}\in \Z^{V(G)}\mid \text{$a_v\leq a_u\leq a_v+1$ for each $(u,v)\in A(\overrightarrow{G}_f)$}\,\}.
\end{align*}
For $uv\in E(G)$ with $(u,v)\in A(\overrightarrow{G}_f)$, we call the inequalities $x_v\leq x_u\leq x_v+1$ the \emph{$uv$-th defining inequalities} of $D_f^{(0)}$.
    \item 
    There is a natural cube complex structure on $\R^{V(G)}$, in which $\Z^{V(G)}$ is the set of vertices (i.e., the $0$-skeleton) and whose maximal (closed) cubes are the $|V(G)|$-dimensional cubes 
\[
\{\,(c_v)_{v\in V(G)}\in \R^{V(G)}\mid \text{$a_v\leq c_v\leq a_v+1$ for each $v\in V(G)$}\,\}
= \prod_{v\in V(G)}[a_v,a_v+1]
\]
for each $(a_v)_{v\in V(G)}\in \Z^{V(G)}$.
    Define the cube complex $D_f$ as the induced cube subcomplex of $\R^{V(G)}$ determined by $D_f^{(0)}\subseteq \Z^{V(G)}$.
    Notice that the $1$-skeleton $D_f^{(1)}$ of $D_f$ is the (possibly infinite) graph whose vertex set is $D_f^{(0)}$, and in which two vertices $(a_v)_{v\in V(G)}$ and $(b_v)_{v\in V(G)}$ are adjacent if and only if $\sum_{v\in V(G)}|b_v-a_v|=1$. In other words, if we define $\bm e_u=(e_{u,v})_{v\in V(G)}\in \Z^{V(G)}$ by 
\[
    e_{u,v}=\begin{cases}
        1 &\text{if $u=v$, and}\\
        0 &\text{otherwise}
    \end{cases}
\]
for each $u\in V(G)$, then $\bm a,\bm b\in D_f^{(0)}$ are adjacent in  $D_f^{(1)}$ if and only if we have $\bm a=\bm b+\varepsilon\bm e_u$ for some $u\in V(G)$ and some $\varepsilon\in\{\pm 1\}$.
    \end{enumerate}
\end{definition}
\begin{figure}[h]
	\centering
	\begin{tikzpicture}[scale=0.2]
		\node(a01) at (0,3) {$\bullet$};
		\node(a02) at (0,9) {$\bullet$};
		\node(a03) at (4,0) {$\bullet$};
		\node(a04) at (4,6) {$\bullet$};
		\node(a05) at (4,12) {$\bullet$};
		\node(a06) at (10,0) {$\bullet$};
		\node(a07) at (10,6) {$\bullet$};
		\node(a08) at (10,12) {$\bullet$};
		\node(a09) at (14,2) {$\bullet$};
		\node(a10) at (14,10) {$\bullet$};
		\node(a11) at (16,6) {$\bullet$};
	  
		\draw (0,3) -- (0,9) -- (4,12) -- (10,12) -- (14,10) -- (16,6) -- (14,2) -- (10,0) -- (4,0) -- (0,3);
		\draw (4,0) -- (4,6) -- (4,12);
		\draw (10,0) -- (10,6) -- (10,12);
		\draw (4,6) -- (10,6);

		\node(b01) at (30,3) {$\bullet$};
		\node(b02) at (30,9) {$\bullet$};
		\node(b03) at (34,0) {$\bullet$};
		\node(b04) at (34,6) {$\bullet$};
		\node(b05) at (34,12) {$\bullet$};
		\node(b06) at (40,0) {$\bullet$};
		\node(b07) at (40,6) {$\bullet$};
		\node(b08) at (40,12) {$\bullet$};
		\node(b09) at (44,2) {$\bullet$};
		\node(b10) at (44,10) {$\bullet$};
		\node(b11) at (46,6) {$\bullet$};

		\draw[->, shorten >=2pt] (40,12) -- (34,12);
		\draw[->, shorten >=2pt] (34,12) -- (30,9);
		\draw[->, shorten >=2pt] (30,9) -- (30,3);
		\draw[->, shorten >=2pt] (30,3) -- (34,0);
		\draw[->, shorten >=2pt] (34,0) -- (40,0);
		\draw[->, shorten >=2pt] (40,0) -- (44,2);
		\draw[->, shorten >=2pt] (44,2) -- (46,6);
		\draw[->, shorten >=2pt] (40,12) -- (44,10);
		\draw[->, shorten >=2pt] (44,10) -- (46,6);
		\draw[->, shorten >=2pt] (34,6) -- (34,12);
		\draw[->, shorten >=2pt] (34,0) -- (34,6);
		\draw[->, shorten >=2pt] (40,6) -- (40,0);
		\draw[->, shorten >=2pt] (40,6) -- (34,6);
		\draw[->, shorten >=2pt] (40,6) -- (40,12);

		\node at (a01) [left] {$1$};
		\node at (a02) [left] {$0$};
		\node at (a03) [below] {$2$};
		\node at (a04) [left] {$3$};
		\node at (a05) [above] {$4$};
		\node at (a06) [below] {$3$};
		\node at (a07) [right] {$2$};
		\node at (a08) [above] {$3$};
		\node at (a09) [below right] {$4$};
		\node at (a10) [above right] {$4$};
		\node at (a11) [right] {$0$};

		\node at (b01) [left] {$1$};
		\node at (b02) [left] {$0$};
		\node at (b03) [below] {$2$};
		\node at (b04) [left] {$3$};
		\node at (b05) [above] {$4$};
		\node at (b06) [below] {$3$};
		\node at (b07) [right] {$2$};
		\node at (b08) [above] {$3$};
		\node at (b09) [below right] {$4$};
		\node at (b10) [above right] {$4$};
		\node at (b11) [right] {$0$};

		%\node at (-2,0) {$G$};
		%\node at (28,0) {$\overrightarrow{G}_f$};

	\end{tikzpicture}
	\caption{Left: a graph $G$ and a homomorphism $f\colon G\to C_5$, where $f$ is specified by labels on vertices.
    Right: the digraph
    $\overrightarrow{G}_f$.}
	\label{fig:G-f}
\end{figure}

\begin{example}
\label{ex:path}
Let $G$ be the graph 
\[
\begin{tikzpicture}[baseline=-\the\dimexpr\fontdimen22\textfont2\relax ]
      \node(0) at (0,0) {$\bullet$};
      \node(1) at (1,0) {$\bullet$};
      \node(2) at (2,0) {$\bullet$};
      \draw (0,0) to (2,0);
      \node at (0.north) {$u$}; 
      \node at (1.north) {$v$}; 
      \node at (2.north) {$w$}; 
\end{tikzpicture}
\]
and $k\geq 3$. Define a homomorphism $f\colon G\to C_k$ by $f(u)=[0]_k$, $f(v)=[1]_k$, and $f(w)=[2]_k$.
Then $D_f$ is a $2$-dimensional complex illustrated in \cref{fig:P3Ck} (irrespective of the value of $k$). \cref{fig:ConnHom} illustrates $\Conn\bigl(\Hom(G,C_k),f\bigr)$ for $3\leq k\leq 6$. Notice that the case where $k=4$ is markedly different. In particular, whereas there is a covering map $D_f\to \Conn\bigl(\Hom(G,C_k),f\bigr)$ when $k=3,5$, or $6$, there is no covering map $D_f\to \Conn\bigl(\Hom(G,C_4),f\bigr)$, as $\Conn\bigl(\Hom(G,C_4),f\bigr)$ is a $3$-dimensional cube.
Also note that the number of homomorphisms $G\to C_k$ is $4k$. Thus \cref{fig:ConnHom} shows that 
we have $\Conn\bigl(\Hom(G,C_k),f\bigr)=\Hom(G,C_k)$ when $k=3$ or $5$, whereas we have $\Conn\bigl(\Hom(G,C_k),f\bigr)\subsetneq\Hom(G,C_k)$ when $k=4$ or $6$. 
In fact, it is not difficult to see that, for this $G$, $\Hom(G,C_k)$ is connected when $k$ is odd and $\Hom(G,C_k)$ has two connected components when $k$ is even.
\end{example}

\begin{figure}[h]
    \centering
		\begin{tikzpicture}[scale=0.2]
			\coordinate(A) at (0,0);
			\coordinate(B) at (3,2);
			\coordinate(C) at (33,28);
			\coordinate(D) at (36,30);
			
			\coordinate(A1) at (12,10);
			\coordinate(A2) at (6,10);
			\coordinate(A3) at (6,4);
			\coordinate(A4) at (12,4);
			
			\coordinate(B1) at (21,18);
			\coordinate(B2) at (15,18);
			\coordinate(B3) at (15,12);
			\coordinate(B4) at (21,12);
			
			\coordinate(C1) at (30,26);
			\coordinate(C2) at (24,26);
			\coordinate(C3) at (24,20);
			\coordinate(C4) at (30,20);
			
			\draw[fill=gray!20] (A1) -- (A2) -- (A3) -- (A4) -- cycle;
			\draw[fill=gray!20] (B1) -- (B2) -- (B3) -- (B4) -- cycle;
			\draw[fill=gray!20] (C1) -- (C2) -- (C3) -- (C4) -- cycle;
			
			\draw (A) -- (B)[dashed];
			\draw (B) -- (A3);
			\draw (A1) -- (B3);
			\draw (B1) -- (C3);
			\draw (C1) -- (C);
			\draw (C) -- (D)[dashed];
			
			\node at (A1) [below right] {$(1,0,0)$};
			\node at (A2) [above left] {$(0,0,0)$};
			\node at (A3) [above left] {$(0,0,-1)$};
			\node at (A4) [below right] {$(1,0,-1)$};
			
			\node at (B1) [below right] {$(2,1,1)$};
			\node at (B2) [above left] {$(1,1,1)$};
			\node at (B3) [above left] {$(1,1,0)$};
			\node at (B4) [below right] {$(2,1,0)$};
			
			\node at (C1) [below right] {$(3,2,2)$};
			\node at (C2) [above left] {$(2,2,2)$};
			\node at (C3) [above left] {$(2,2,1)$};
			\node at (C4) [below right] {$(3,2,1)$};
			
			\draw[->] (30,0) -- (36,0) node[anchor=north] {$x_u$};
			\draw[->] (30,0) -- (33,2) node[anchor=south west] {$x_v$};
			\draw[->] (30,0) -- (30,6) node[anchor=east] {$x_w$};
		\end{tikzpicture}
    \caption{$D_f$ defined by $x_v\leq x_u\leq x_v+1$ and $x_w\leq x_v\leq x_w+1$ (see \cref{ex:path}).}
    \label{fig:P3Ck}
\end{figure}

\begin{figure}[h]
	\centering
    \begin{tikzpicture}[scale=0.19]
		
		\coordinate(A1) at (12,10);
		\coordinate(A2) at (6,10);
		\coordinate(A3) at (6,4);
		\coordinate(A4) at (12,4);
		
		\coordinate(B1) at (21,18);
		\coordinate(B2) at (15,18);
		\coordinate(B3) at (15,12);
		\coordinate(B4) at (21,12);
		
		\coordinate(C1) at (30,26);
		\coordinate(C2) at (24,26);
		\coordinate(C3) at (24,20);
		\coordinate(C4) at (30,20);
		
		\draw (A1) -- (A2) -- (A3) -- (A4) -- cycle;
		\draw (B1) -- (B2) -- (B3) -- (B4) -- cycle;
		\draw (C1) -- (C2) -- (C3) -- (C4) -- cycle;

		\draw (A1) -- (B3);
		\draw (B1) -- (C3);
		
		\node at (A1) [below right] {$212$};
		\node at (A2) [above left] {$012$};
		\node at (A3) [above left] {$010$};
		\node at (A4) [above right] {$210$};
		
		\node at (B1) [below right] {$101$};
		\node at (B2) [above left] {$201$};
		\node at (B3) [above left] {$202$};
		\node at (B4) [above right] {$102$};
		
		\node at (C1) [above right] {$020$};
		\node at (C2) [above left] {$120$};
		\node at (C3) [above left] {$121$};
		\node at (C4) [below left] {$021$};
        \draw[fill=gray!20] (A1) -- (A2) -- (A3) -- (A4) -- cycle;
			\draw[fill=gray!20] (B1) -- (B2) -- (B3) -- (B4) -- cycle;
			\draw[fill=gray!20] (C1) -- (C2) -- (C3) -- (C4) -- cycle;
		
		\draw (A3) .. controls (6,-10) and (45,26) .. (C1);

        \node at (8,24) {$k=3$};
    \begin{scope}[shift={(2,-22)}]
		\coordinate(A1) at (12,10);
		\coordinate(A2) at (4,10);
		\coordinate(A3) at (4,2);
		\coordinate(A4) at (12,2);
		
		\coordinate(B1) at (16,13);
		\coordinate(B2) at (8,13);
		\coordinate(B3) at (8,5);
		\coordinate(B4) at (16,5);
        \draw[fill=gray!20] (A3) -- (A4) -- (B4)-- (B1) -- (B2) -- (A2) -- cycle;
        
		\draw (A1) -- (A2) -- (A3) -- (A4) -- cycle;
		\draw (B1) -- (B2) -- (B3) -- (B4) -- cycle;
		\draw (A1) -- (B1);
		\draw (A2) -- (B2);
		\draw (A3) -- (B3);
		\draw (A4) -- (B4);

		\node at (A1) [below right] {$212$};
		\node at (A2) [above left] {$012$};
		\node at (A3) [below left] {$010$};
		\node at (A4) [below right] {$210$};
		
		\node at (B1) [above right] {$232$};
		\node at (B2) [above left] {$032$};
		\node at (B3) [above left] {$030$};
		\node at (B4) [above right] {$230$};

        \node at (4,-4) {$k=4$};
    \end{scope}
    \begin{scope}[shift={(15,-40)}]
    \clip (0,0) rectangle (53, 45);
		\coordinate(A1) at (12,10);
		\coordinate(A2) at (6,10);
		\coordinate(A3) at (6,4);
		\coordinate(A4) at (12,4);
		
		\coordinate(B1) at (21,18);
		\coordinate(B2) at (15,18);
		\coordinate(B3) at (15,12);
		\coordinate(B4) at (21,12);
		
		\coordinate(C1) at (30,26);
		\coordinate(C2) at (24,26);
		\coordinate(C3) at (24,20);
		\coordinate(C4) at (30,20);
		
		\coordinate(D1) at (39,34);
		\coordinate(D2) at (33,34);
		\coordinate(D3) at (33,28);
		\coordinate(D4) at (39,28);
		
		\coordinate(E1) at (48,42);
		\coordinate(E2) at (42,42);
		\coordinate(E3) at (42,36);
		\coordinate(E4) at (48,36);
		
		\draw (A1) -- (A2) -- (A3) -- (A4) -- cycle;
		\draw (B1) -- (B2) -- (B3) -- (B4) -- cycle;
		\draw (C1) -- (C2) -- (C3) -- (C4) -- cycle;
		\draw (D1) -- (D2) -- (D3) -- (D4) -- cycle;
		\draw (E1) -- (E2) -- (E3) -- (E4) -- cycle;

		\draw (A1) -- (B3);
		\draw (B1) -- (C3);
		\draw (C1) -- (D3);
		\draw (D1) -- (E3);
		
		\node at (A1) [below right] {$212$};
		\node at (A2) [above left] {$012$};
		\node at (A3) [above left] {$010$};
		\node at (A4) [above right] {$210$};
		
		\node at (B1) [below right] {$434$};
		\node at (B2) [above left] {$234$};
		\node at (B3) [above left] {$232$};
		\node at (B4) [above right] {$432$};
		
		\node at (C1) [below right] {$101$};
		\node at (C2) [above left] {$401$};
		\node at (C3) [above left] {$404$};
		\node at (C4) [below left] {$104$};
		
		\node at (D1) [below right] {$323$};
		\node at (D2) [above left] {$123$};
		\node at (D3) [above left] {$121$};
		\node at (D4) [below left] {$321$};
		
		\node at (E1) [above right] {$040$};
		\node at (E2) [above left] {$340$};
		\node at (E3) [above left] {$343$};
		\node at (E4) [below left] {$043$};
		
		\draw (A3) .. controls (6,-13) and (66,42) .. (E1);
            \draw[fill=gray!20] (A1) -- (A2) -- (A3) -- (A4) -- cycle;
			\draw[fill=gray!20] (B1) -- (B2) -- (B3) -- (B4) -- cycle;
			\draw[fill=gray!20] (C1) -- (C2) -- (C3) -- (C4) -- cycle;
        \draw[fill=gray!20] (D1) -- (D2) -- (D3) -- (D4) -- cycle;
			\draw[fill=gray!20] (E1) -- (E2) -- (E3) -- (E4) -- cycle;

        \node at (40,14) {$k=5$};
        
    \end{scope}
    \begin{scope}[shift={(30,0)}]
		\clip (0,0) rectangle (35, 30);
		\coordinate(A1) at (12,10);
		\coordinate(A2) at (6,10);
		\coordinate(A3) at (6,4);
		\coordinate(A4) at (12,4);
		
		\coordinate(B1) at (21,18);
		\coordinate(B2) at (15,18);
		\coordinate(B3) at (15,12);
		\coordinate(B4) at (21,12);
		
		\coordinate(C1) at (30,26);
		\coordinate(C2) at (24,26);
		\coordinate(C3) at (24,20);
		\coordinate(C4) at (30,20);
		
		\draw (A1) -- (A2) -- (A3) -- (A4) -- cycle;
		\draw (B1) -- (B2) -- (B3) -- (B4) -- cycle;
		\draw (C1) -- (C2) -- (C3) -- (C4) -- cycle;

		\draw (A1) -- (B3);
		\draw (B1) -- (C3);
		
		\node at (A1) [below right] {$212$};
		\node at (A2) [above left] {$012$};
		\node at (A3) [above left] {$010$};
		\node at (A4) [above right] {$210$};
		
		\node at (B1) [below right] {$434$};
		\node at (B2) [above left] {$234$};
		\node at (B3) [above left] {$232$};
		\node at (B4) [above right] {$432$};
		
		\node at (C1) [above right] {$050$};
		\node at (C2) [above left] {$450$};
		\node at (C3) [above left] {$454$};
		\node at (C4) [below left] {$054$};
		
		\draw (A3) .. controls (6,-10) and (45,26) .. (C1);
        \draw[fill=gray!20] (A1) -- (A2) -- (A3) -- (A4) -- cycle;
			\draw[fill=gray!20] (B1) -- (B2) -- (B3) -- (B4) -- cycle;
			\draw[fill=gray!20] (C1) -- (C2) -- (C3) -- (C4) -- cycle;
        \node at (12,24) {$k=6$};
     \end{scope}
    \end{tikzpicture}
	\caption{$\Conn\bigl(\Hom(G,C_k),f\bigr)$ for $k=3$, $4$, $5$, and $6$ (counterclockwise from top left; see \cref{ex:path}). The label $ij\ell$ indicates the homomorphism $g\colon G\to C_k$ defined by $g(u)=[i]_k$, $g(v)=[j]_k$, and $g(w)=[\ell]_k$.}
	\label{fig:ConnHom}
\end{figure}

Let $G$ be a graph, $k\geq 3$ an integer, and $f\colon G\to C_k$ a graph homomorphism.
As we shall show in \cref{sec:Construction}, there is a natural continuous map $p_f\colon D_f\to \Hom(G,C_k)$ and, under suitable conditions, we can obtain the universal cover of the connected component $\Conn\bigl(\Hom(G,C_k),f\bigr)$ by restricting $p_f$. 
In fact, we can already define the action of $p_f$ on vertices.

% Let $f\colon G\to C_k$ be a homomorphism. As we shall show in \cref{sec:Construction}, there is a natural continuous map $p_f\colon D_f\to \Hom(G,C_k)$ and, under suitable conditions, we obtain the universal cover of the connected component $\Conn\bigl(\Hom(G,C_k),f\bigr)$ by restricting $p_f\colon D_f\to \Hom(G,C_k)$. 
% In fact, we can already define the action of $p_f$ on vertices.

\begin{proposition}
\label{prop:p-well-defined}
    Let $G$ be a graph, $k\geq 3$, $f\colon G\to C_k$ a homomorphism, and  $\bm a=(a_v)_{v\in V(G)}\in D_f^{(0)}$. Then the function $p_f(\bm a)\colon V(G)\to \Z/k\Z$ defined by 
    \[
    p_f(\bm a)(u)=f(u)+[2a_u]_k
    \]
    for each $u\in V(G)$, is a homomorphism $p_f(\bm a)\colon G\to C_k$. More precisely, for any edge $uv\in E(G)$ with $(u,v)\in A(\overrightarrow{G}_f)$, we have 
    \begin{equation}
    \label{eqn:p-alpha-a-cases}
        p_f(\bm a)(v)-p_f(\bm a)(u)=\begin{cases}
            [1]_k  &\text{if $a_u=a_v$, and}\\
            -[1]_k &\text{if $a_u=a_v+1$.}
        \end{cases}
    \end{equation}
\end{proposition}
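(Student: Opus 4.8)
The statement is a direct bookkeeping lemma, so the plan is simply to unwind the definitions and compute in $\Z/k\Z$. First I would note that $\overrightarrow{G}_f$ is genuinely an orientation of $G$: for any edge $uv\in E(G)$, since $f$ is a homomorphism the vertices $f(u)$ and $f(v)$ are adjacent in $C_k$, so $f(v)-f(u)\in\{\pm[1]_k\}$, and these two possibilities are mutually exclusive because $[2]_k\neq[0]_k$ for $k\geq 3$. Hence exactly one of $(u,v)$ and $(v,u)$ is an arc of $\overrightarrow{G}_f$. Consequently, to show that the function $p_f(\bm a)\colon V(G)\to\Z/k\Z$ is a graph homomorphism $G\to C_k$, it suffices to verify that $p_f(\bm a)(u)$ and $p_f(\bm a)(v)$ are adjacent in $C_k$ for every arc $(u,v)\in A(\overrightarrow{G}_f)$; and since a vertex $x$ of $C_k$ is adjacent to $y$ precisely when $y-x\in\{\pm[1]_k\}$, this in turn follows from the asserted formula \eqref{eqn:p-alpha-a-cases}.

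So it remains to prove \eqref{eqn:p-alpha-a-cases}. Fix an arc $(u,v)\in A(\overrightarrow{G}_f)$, so that $f(v)-f(u)=[1]_k$ by definition of $\overrightarrow{G}_f$. Since $\bm a\in D_f^{(0)}$, the $uv$-th defining inequalities $a_v\leq a_u\leq a_v+1$ hold, whence $a_u-a_v\in\{0,1\}$. I would then compute directly in $\Z/k\Z$:
\[
p_f(\bm a)(v)-p_f(\bm a)(u)=\bigl(f(v)+[2a_v]_k\bigr)-\bigl(f(u)+[2a_u]_k\bigr)=\bigl(f(v)-f(u)\bigr)+[2(a_v-a_u)]_k=[1+2(a_v-a_u)]_k.
\]
If $a_u=a_v$, the right-hand side equals $[1]_k$; if $a_u=a_v+1$, it equals $[1-2]_k=[-1]_k=-[1]_k$. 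This is exactly \eqref{eqn:p-alpha-a-cases}, and, as observed in the first paragraph, it also shows that $p_f(\bm a)$ is a homomorphism.

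I do not expect any real obstacle here: the argument is essentially the one-line computation above combined with the defining inequalities of $D_f^{(0)}$. The only point that deserves an explicit sentence is the reduction in the first paragraph, namely that every edge of $G$ is assigned a definite direction by $f$ (because $[2]_k \neq [0]_k$ for $k \geq 3$), so that the two cases appearing in \eqref{eqn:p-alpha-a-cases} genuinely exhaust all edges of $G$.
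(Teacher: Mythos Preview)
Your proof is correct and is essentially the same as the paper's, which simply states that ``the first assertion follows from the second, which is straightforward to check.'' You have spelled out that straightforward check in full detail; the computation and the reduction via the orientation $\overrightarrow{G}_f$ are exactly what the paper is alluding to.
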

\begin{proof}
    The first assertion follows from the second, which is straightforward to check.
\end{proof}

\cref{prop:p-well-defined} shows that $\bm a\mapsto p_f(\bm a)$ defines a function $p_f^{(0)}\colon D_f^{(0)}\to \Hom(G,C_k)^{(0)}$, where $ \Hom(G,C_k)^{(0)}$ is the set of all homomorphisms $G\to C_k$. 
It is illustrative to see the action of this function in the situation of \cref{ex:path}; in particular, \cref{fig:P3Ck,fig:ConnHom} show that it defines the action on vertices of a covering map $D_f\to \Conn\bigl(\Hom(G,C_k),f\bigr)$ when $k=3,5$, and $6$.
We put the superscript $(0)$ on the names of the set $D_f^{(0)}$ and of the function $p_f^{(0)}$, because $p_f^{(0)}\colon D_f^{(0)}\to \Hom(G,C_k)^{(0)}$ will turn out to be the restriction to the $0$-skeletons of a map $p_f\colon D_f\to\Hom(G,C_k)$ of cube complexes. However, we avoid putting the superscript when denoting a value $p_f(\bm a)$ of the function $p_f^{(0)}$. 
We adopt similar notational conventions in what follows. 

The following proposition is obvious; its proof is omitted.
\begin{proposition}
\label{prop:p-hom-of-graphs}
    Let $G$ be a graph, $k\geq 3$, and $f\colon G\to C_k$ a graph homomorphism.
    Then the function $p_f^{(0)}\colon D_f^{(0)}\to \Hom(G,C_k)^{(0)}$ defines a graph homomorphism $p_f^{(1)}\colon D_f^{(1)}\to \Hom(G,C_k)^{(1)}$.
\end{proposition}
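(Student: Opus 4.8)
The plan is to simply unwind the two notions of adjacency and compare. By \cref{prop:p-well-defined}, the function $p_f^{(0)}$ already takes values in $\Hom(G,C_k)^{(0)}$ (the set of homomorphisms $G\to C_k$), so the only thing to verify is that it sends edges of $D_f^{(1)}$ to edges of $\Hom(G,C_k)^{(1)}$. First I would take $\bm a,\bm b\in D_f^{(0)}$ adjacent in $D_f^{(1)}$ and invoke the explicit description of $D_f^{(1)}$ from \cref{def:G-L}(3): we may write $\bm a=\bm b+\varepsilon\bm e_u$ for some $u\in V(G)$ and $\varepsilon\in\{\pm 1\}$. Swapping the roles of $\bm a$ and $\bm b$ if necessary, I may assume $\varepsilon=1$, so that $a_u=b_u+1$ and $a_v=b_v$ for every $v\in V(G)\setminus\{u\}$.

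Next I would compare $p_f(\bm a)$ and $p_f(\bm b)$ coordinate by coordinate using the formula $p_f(\bm c)(w)=f(w)+[2c_w]_k$. For $w\neq u$ we have $a_w=b_w$, hence $p_f(\bm a)(w)=p_f(\bm b)(w)$. For $w=u$ we get $p_f(\bm a)(u)=f(u)+[2a_u]_k=f(u)+[2b_u]_k+[2]_k=p_f(\bm b)(u)+[2]_k$, and since $k\geq 3$ we have $[2]_k\neq[0]_k$ in $\Z/k\Z$, so $p_f(\bm a)(u)\neq p_f(\bm b)(u)$. Thus $u$ is the unique vertex of $G$ at which the homomorphisms $p_f(\bm a)$ and $p_f(\bm b)$ differ, which is precisely the condition (from the first paragraph of \cref{sec:Introduction}) for $p_f(\bm a)$ and $p_f(\bm b)$ to be adjacent in $\Hom(G,C_k)^{(1)}$. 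Hence $p_f^{(0)}$ preserves adjacency and therefore defines a graph homomorphism $p_f^{(1)}\colon D_f^{(1)}\to\Hom(G,C_k)^{(1)}$.

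There is essentially no obstacle here; the only subtlety worth flagging is the elementary fact that $[2]_k\neq[0]_k$, which is used to guarantee that the two homomorphisms actually differ at $u$ (and not merely coincide), and which holds exactly because $k\geq 3$. In particular this proposition, unlike later results, does not require the hypothesis $k\neq 4$.
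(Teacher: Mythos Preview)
Your proof is correct and is exactly the routine verification the paper has in mind; the paper itself omits the proof as obvious, and your argument is precisely the expected unwinding of the two adjacency conditions. The observation that $[2]_k\neq[0]_k$ for $k\geq 3$ is the only point worth noting, and you handle it correctly.
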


The following proposition shows that there is a close relationship between the notions just introduced, provided that $k\neq 4$. It is used repeatedly throughout this paper.

\begin{proposition}
\label{prop:local-str-of-L}
    Let $G$ be a graph without isolated vertices, $k$ an integer with $k\geq 3$ and $k\neq 4$, and $f\colon G\to C_k$ a graph homomorphism.
    Let $\bm a\in D^{(0)}_f$ and $u\in V(G)$. Then the following conditions are equivalent. 
       \begin{enumerate}[label=\emph{(\arabic*)}]
        \item The vertex $u$ is a source in $\overrightarrow{G}_{p_f(\bm a)}$. 
        \item We have $\bm a+\bm e_u\in D^{(0)}_f$.
        \item There exists a homomorphism $g\colon G\to C_k$ adjacent to $p_f(\bm a)$ such that $p_f(\bm a)(u)\neq g(u)$ and $\bigl(p_f(\bm a),g\bigr)$ is of positive type. 
    \end{enumerate}
    Similarly, the following conditions are equivalent. 
    \begin{enumerate}[label=\emph{(\arabic*)}]
    \setcounter{enumi}{3}
        \item The vertex $u$ is a sink in $\overrightarrow{G}_{p_f(\bm a)}$. 
        \item We have $\bm a-\bm e_u\in D^{(0)}_f$.
        \item There exists a homomorphism $g\colon G\to C_k$ adjacent to $p_f(\bm a)$ such that $p_f(\bm a)(u)\neq g(u)$ and $\bigl(p_f(\bm a),g\bigr)$ is of negative type. 
    \end{enumerate}
\end{proposition}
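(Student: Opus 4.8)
The plan is to prove the two chains of equivalences $(1)\Leftrightarrow(2)\Leftrightarrow(3)$ and $(4)\Leftrightarrow(5)\Leftrightarrow(6)$; the second chain is entirely parallel to the first, so I will concentrate on the first. (One can in fact deduce the second from the first by applying it to the homomorphism $\bar f\colon v\mapsto -f(v)$ and the point $-\bm a\in\Z^{V(G)}$, since $v\mapsto -v$ is an automorphism of $C_k$ which reverses every arc of the relevant digraphs, interchanges $D_f^{(0)}$ with $-D_{\bar f}^{(0)}$, and swaps positive and negative type.)

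Write $h=p_f(\bm a)$. First I would translate condition (1) into a combinatorial condition on $\bm a$. Fix an edge $uv\in E(G)$; since $h$ is a homomorphism we have $h(v)-h(u)\in\{\pm[1]_k\}$, and these two values are distinct because $k\neq 2$, so $u$ is a source in $\overrightarrow{G}_h$ if and only if $h(v)-h(u)=[1]_k$ for every neighbor $v$ of $u$. Applying \cref{prop:p-well-defined} to the edge $uv$ --- distinguishing the cases $(u,v)\in A(\overrightarrow{G}_f)$ and $(v,u)\in A(\overrightarrow{G}_f)$ --- and using that $\bm a\in D_f^{(0)}$, this condition becomes: $a_u=a_v$ for every out-neighbor $v$ of $u$ in $\overrightarrow{G}_f$, and $a_v=a_u+1$ for every in-neighbor $v$ of $u$ in $\overrightarrow{G}_f$. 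On the other hand, since $\bm a+\bm e_u$ differs from $\bm a$ only in the $u$-coordinate, the only defining inequalities of $D_f^{(0)}$ that can fail for $\bm a+\bm e_u$ are those associated to edges of $G$ incident to $u$; a direct inspection (again splitting according to the orientation of such an edge in $\overrightarrow{G}_f$, and using that $\bm a$ itself satisfies all defining inequalities) shows that the inequality associated to $uv$ holds for $\bm a+\bm e_u$ precisely under the condition just displayed. This yields $(1)\Leftrightarrow(2)$.

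For $(1)\Leftrightarrow(3)$: if (3) holds, then applying \cref{prop:MNP}(1) to the adjacent pair $\bigl(h,g\bigr)$ --- noting that $u$ is the vertex at which $h$ and $g$ differ, since $h(u)\neq g(u)$ --- gives $h(v)-h(u)=[1]_k$ for every neighbor $v$ of $u$, which is (1). Conversely, assuming (1), I would define $g\colon V(G)\to\Z/k\Z$ by $g(u)=h(u)+[2]_k$ and $g(w)=h(w)$ for $w\neq u$, and check: that $g$ is a graph homomorphism (the only edges to verify are those incident to $u$, where $h(v)=h(u)+[1]_k$ by (1), hence $g(v)-g(u)=-[1]_k$); that $g(u)\neq h(u)$, because $[2]_k\neq[0]_k$ as $k\geq 3$; and that for every neighbor $v$ of $u$ one has $h(u)+[1]_k=h(v)=g(v)=g(u)-[1]_k$, so that $(h,g)$ is of positive type. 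This gives (3), completing the first chain.

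The chain $(4)\Leftrightarrow(5)\Leftrightarrow(6)$ is proved by the same argument with the sign of $[1]_k$ reversed throughout: ``$u$ is a sink'' means $h(v)-h(u)=-[1]_k$ for every neighbor $v$, the relevant defining inequalities become equalities in the opposite sense for $\bm a-\bm e_u$, and in the construction one takes $g(u)=h(u)-[2]_k$ and invokes \cref{prop:MNP}(2) in place of (1). The only step requiring genuine care is the two-case bookkeeping in the argument above --- keeping straight which of $\pm[1]_k$ arises at an edge incident to $u$ according to its orientation in $\overrightarrow{G}_f$, and which defining inequality is thereby affected --- but this is routine. I note that $k\neq 4$ enters only through \cref{prop:MNP}, while $k\neq 2$ (automatic from $k\geq 3$) is what separates $[1]_k$ from $-[1]_k$, and $k\geq 3$ is also what makes $[2]_k\neq[0]_k$ in the construction of $g$.
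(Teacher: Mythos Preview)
Your proof is correct and follows essentially the same route as the paper's. The only organizational difference is that the paper localizes all three conditions to edge-by-edge statements $(1')$, $(2')$, $(3')$ and proves a cyclic implication among them (using the arithmetic $[-3]_k\notin\{\pm[1]_k\}$ directly for $(3')\Rightarrow(1')$), whereas you prove $(1)\Leftrightarrow(2)$ and $(1)\Leftrightarrow(3)$ separately and invoke \cref{prop:MNP} for $(3)\Rightarrow(1)$; since \cref{prop:MNP} encapsulates exactly that arithmetic, this is the same argument in slightly different packaging.
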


Before proceeding to a proof of \cref{prop:local-str-of-L}, we make a few comments. 
First, the assumption that $G$ should not have any isolated vertices is not explicitly used in the following proof, but is included in order to ensure that the notions of a positive or negative type (introduced just after \cref{prop:MNP}) are well defined.
Second, it may be helpful to look at \cref{fig:G-f} to gain some intuition for \cref{prop:local-str-of-L}.
For example, let $u$ be the middle vertex labeled $2$ in \cref{fig:G-f}. Then, $u$ is a source in $\overrightarrow{G}_f$, i.e., (1) of \cref{prop:local-str-of-L} is satisfied (with $\mathbf{a}=\mathbf{0}=(0)_{v\in V(G)}$). 
On the other hand, we can change the value of $f$ at $u$ from $[2]_5$ to $[4]_5$ to obtain a new graph homomorphism $g\colon G\to C_k$ such that $(f,g)$ is of positive type, i.e., (3) of \cref{prop:local-str-of-L} is satisfied (again with $\mathbf{a}=\mathbf{0}$). 

\begin{proof}[Proof of \cref{prop:local-str-of-L}]
    We show the equivalence of (1), (2), and (3). 
    First observe that a homomorphism $g$ satisfying the condition of (3) is necessarily given by 
\begin{equation}
\label{eqn:g}
        g(v)=\begin{cases}
            p_f(\bm a)(u)+[2]_k &\text{if $v=u$, and}\\
            p_f(\bm a)(v)   &\text{if $v\neq u$}
        \end{cases}
\end{equation}
    for each $v\in V(G)$. Therefore, (3) is equivalent to the condition that the function $g\colon V(G)\to \Z/k\Z$ defined by \cref{eqn:g} should be a homomorphism $g\colon G\to C_k$.

    Now consider the following statements for a vertex $v\in V(G)$ adjacent to $u$.
\begin{itemize}
    \item[($1'$)] We have $(u,v)\in A(\overrightarrow{G}_{p_f(\bm a)})$.
    \item[($2'$)] The element $\bm a + \bm e_u\in \Z^{V(G)}$ satisfies the $uv$-th defining inequalities of $D_f^{(0)}$. 
    \item[($3'$)] The function $g$ defined by \cref{eqn:g} satisfies $g(v)-g(u) = f(v)-f(u) + [2a_v -2a_u-2]_k\in \{\pm [1]_k\}$.
\end{itemize}
     It is easy to see that, for each $i\in \{1,2,3\}$, ($i$) is equivalent to the condition that ($i'$) should be satisfied by all vertices $v\in V(G)$ adjacent to $u$.
     Therefore it suffices to show the equivalence of ($1'$), ($2'$), and ($3'$), for each vertex $v\in V(G)$ adjacent to $u$.
    
    Let us consider the case where $(u,v)\in A(\overrightarrow{G}_f)$; the case where $(v,u)\in A(\overrightarrow{G}_f)$ is similar. The $uv$-th defining inequalities of $D^{(0)}_f$ are $x_v\leq x_u\leq x_v+1$, and hence we have 
    \begin{equation}
    \label{eqn:uv-def-ineq-a}
         a_v\leq a_u\leq a_v+1
    \end{equation}
    by the assumption $\bm a\in D_f^{(0)}$. We have 
    \begin{align*}
        (1') &\overset{\cref{eqn:p-alpha-a-cases}}{\iff} a_u=a_v, \\
       (2') &\iff a_v\leq a_u+1\leq a_v+1, \text{ and}\\
       (3') &\iff [2a_v-2a_u-1]_k\in \{\pm [1]_k\}.
    \end{align*}
    Clearly ($1'$) implies ($2'$), and ($2'$) implies ($3'$). Given \cref{eqn:uv-def-ineq-a}, either $a_u=a_v$ or $a_u=a_v+1$ holds. In the former case, we have $[2a_v-2a_u-1]_k=[-1]_k$. In the latter case, we have $[2a_v-2a_u-1]_k=[-3]_k$, which is neither $[1]_k$ nor $[-1]_k$ since $k\geq 3$ and $k\neq 4$. Hence for ($3'$) to be satisfied, we have to have $a_u=a_v$; that is, ($3'$) implies ($1'$). 
    This completes the proof of the equivalence of (1), (2), and (3).
    In much the same way, one can show the equivalence of (4), (5), and (6). 
\end{proof}

Our aim in this paper is to determine the homotopy type of each connected component of $\Hom(G,C_k)$, for any graph $G$ and all $k\geq 3$. 
Observe that we can restrict our attention to the case where $G$ is connected, since in general we have $\Hom(G\sqcup G',H)\cong \Hom(G,H)\times \Hom(G',H)$ \cite[Section~2.4]{BaKo06},
where $G\sqcup G'$ denotes the disjoint union of two graphs $G$ and $G'$.
We can easily treat the cases where $k=4$ or where $G$ consists of a single isolated vertex.
\begin{proposition}
\label{prop:square}
    Let $G$ be a graph. Then each connected component of $\Hom(G,C_4)$ is contractible.
\end{proposition}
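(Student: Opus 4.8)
The plan is to reduce $C_4$ to the complete graph $K_2$ by folding, and then to observe that $\Hom(G,K_2)$ is essentially $0$-dimensional.

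First I would apply the folding theorem \cite[Theorem~18.22]{Koz08} twice. In $C_4$ we have $N_{C_4}([0]_4)=\{[1]_4,[3]_4\}=N_{C_4}([2]_4)$, so deleting the vertex $[0]_4$ yields a graph $H_1$ (a path on the three vertices $[1]_4,[2]_4,[3]_4$) for which the inclusion $\Hom(G,H_1)\hookrightarrow\Hom(G,C_4)$ is a homotopy equivalence. In $H_1$ we have $N_{H_1}([1]_4)=\{[2]_4\}=N_{H_1}([3]_4)$, so deleting $[1]_4$ yields a graph $H_2$ isomorphic to $K_2$ (the single edge $[2]_4[3]_4$), and $\Hom(G,H_2)\hookrightarrow\Hom(G,H_1)$ is a homotopy equivalence. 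Composing, the inclusion $\iota\colon\Hom(G,K_2)\hookrightarrow\Hom(G,C_4)$ is a homotopy equivalence. A homotopy equivalence induces a bijection on sets of connected components and restricts to homotopy equivalences between corresponding components (a homotopy $A\times I\to A$ restricted to $A_0\times I$ for a connected component $A_0$ stays inside $A_0$). Hence it suffices to prove that each connected component of $\Hom(G,K_2)$ is contractible.

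Next I would analyze $\Hom(G,K_2)$ directly, using that it is a flag prodsimplicial complex. Since $K_2$ has no loops, any homomorphism $h\colon G\to K_2$ satisfies $h(u)\neq h(v)$ for every edge $uv\in E(G)$; consequently two distinct homomorphisms $G\to K_2$ adjacent in $\Hom(G,K_2)^{(1)}$ can differ only at an \emph{isolated} vertex of $G$, and more generally any cell of $\Hom(G,K_2)$ assigns a singleton to every non-isolated vertex of $G$. Let $W\subseteq V(G)$ be the set of isolated vertices of $G$, and fix a connected component $X$ of $\Hom(G,K_2)$ together with a vertex $h_0\in X$. Every vertex of $X$ agrees with $h_0$ on $V(G)\setminus W$; conversely any homomorphism $G\to K_2$ agreeing with $h_0$ off $W$ can be joined to $h_0$ by flipping the values on $W$ one vertex at a time, hence lies in $X$. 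Therefore $X^{(0)}$ is in bijection with $\{0,1\}^W$ and $X^{(1)}$ is exactly the $1$-skeleton of the $|W|$-dimensional cube; by flagness, $X$ is a single cell of shape $(\Delta^1)^W$, i.e.\ a (possibly $0$-dimensional) cube, and in particular contractible.

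There is essentially no hard step here. The only points requiring a little care are checking that the folding theorem applies for an arbitrary $G$ (possibly disconnected, possibly with isolated vertices, and with $\Hom(G,C_4)=\emptyset$ in the non-bipartite case) and that it yields componentwise homotopy equivalences, and correctly accounting for the isolated vertices of $G$ in the description of $\Hom(G,K_2)$. Regarding the latter, one may alternatively bypass the component-by-component bookkeeping by writing $\Hom(G,K_2)\cong\Hom(G_0,K_2)\times\prod_{w\in W}\Hom(K_1,K_2)$ via $\Hom(G\sqcup G',H)\cong\Hom(G,H)\times\Hom(G',H)$ \cite[Section~2.4]{BaKo06}, where $G_0$ is the subgraph induced on the non-isolated vertices, and noting that $\Hom(G_0,K_2)$ is discrete while $\Hom(K_1,K_2)\cong\Delta^1$ is an interval, so that $\Hom(G,K_2)$ is a disjoint union of contractible cubes.
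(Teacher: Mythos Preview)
Your proof is correct and follows essentially the same approach as the paper: apply the folding theorem twice to obtain a homotopy equivalence $\Hom(G,K_2)\simeq\Hom(G,C_4)$, and then observe that $\Hom(G,K_2)$ is a disjoint union of contractible pieces. Your write-up is more detailed than the paper's---you spell out the two folds explicitly, justify the passage to individual components, and handle isolated vertices of $G$ carefully---whereas the paper simply asserts that $\Hom(G,K_2)$ is homotopy equivalent to a disjoint union of points and illustrates this for connected $G$.
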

\begin{proof}
    Applying the folding theorem \cite[Theorem~18.22]{Koz08} twice, we see that $\Hom(G,C_4)$ is homotopy equivalent to $\Hom(G,K_2)$. It is easy to see that $\Hom(G,K_2)$ is homotopy equivalent to a disjoint union of points. For example, when $G$ is connected (and nonempty), then $\Hom(G,K_2)$ is empty if $G$ is non-bipartite, is the disjoint union of two points if $G$ is bipartite and has at least two vertices, and is a $1$-simplex if $G$ consists of a single vertex.
\end{proof}
\begin{proposition}
\label{prop:isolated}
    Let $G$ be a graph with a single vertex and $k\geq 3$. Then $\Hom(G,C_k)$ is contractible.
\end{proposition}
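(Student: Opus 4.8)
The plan is to identify $\Hom(G,C_k)$ explicitly and observe it is a single simplex. Write $v$ for the unique vertex of $G$. Since $G$ has only one vertex, it has no edges, so a homomorphism $G\to C_k$ is simply a choice of a vertex of $C_k$; thus $g\mapsto g(v)$ is a bijection $\Hom(G,C_k)^{(0)}\cong V(C_k)=\Z/k\Z$. Moreover, any two distinct homomorphisms $g,g'\colon G\to C_k$ differ exactly at $v$, hence are adjacent in $\Hom(G,C_k)^{(1)}$. Therefore $\Hom(G,C_k)^{(1)}$ is isomorphic to the complete graph $K_k$ on the vertex set $\Z/k\Z$.

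Next I would use the fact, recalled in \cref{sec:Preliminaries}, that $\Hom(G,C_k)$ is a flag prodsimplicial complex and is thus determined by $\Hom(G,C_k)^{(1)}\cong K_k$. By the flag property, the cells of shape $\Delta^{d_1}\times\dots\times\Delta^{d_m}$ correspond to the induced subgraphs of $K_k$ isomorphic to $(\Delta^{d_1}\times\dots\times\Delta^{d_m})^{(1)}$. Every induced subgraph of $K_k$ is again a complete graph, and among the $1$-skeletons of products of simplices, the complete graphs are exactly those with $m=1$, namely $(\Delta^{d})^{(1)}=K_{d+1}$. Hence the cells of $\Hom(G,C_k)$ are precisely the simplices spanned by the nonempty subsets of $\Z/k\Z$; in other words, $\Hom(G,C_k)$ is (homeomorphic to) the $(k-1)$-simplex on the vertex set $\Z/k\Z$. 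Since a simplex is contractible, this gives the claim.

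I do not expect any genuine obstacle here; the only point requiring (trivial) care is the remark that induced subgraphs of a complete graph are complete, which rules out any cell of genuinely product shape and pins down $\Hom(G,C_k)$ as a single simplex. Equivalently, one could argue directly from the fact that the face poset $\mathcal{F}\bigl(\Hom(G,C_k)\bigr)$ is the poset of nonempty subsets of $V(C_k)$ ordered by inclusion, which is the face poset of $\Delta^{k-1}$; this is also consistent with the observation in the proof of \cref{prop:square} that $\Hom(G,K_2)$ is a $1$-simplex when $G$ consists of a single vertex.
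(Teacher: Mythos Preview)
Your proof is correct and follows essentially the same approach as the paper: both identify $\Hom(G,C_k)$ as the $(k-1)$-simplex on the vertex set $V(C_k)$ and conclude contractibility from that. The paper states this in one line (noting it holds for any $H$ with $k$ vertices), whereas you spell out the verification via the flag prodsimplicial structure, but the underlying idea is identical.
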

\begin{proof}
    In this case, $\Hom(G,C_k)$ (or more generally, $\Hom(G,H)$ where $H$ is any graph with $k$ vertices) is a $(k-1)$-simplex.
\end{proof}

These observations allow us to focus on the case where $G$ is a connected graph with at least two vertices and where the integer $k\geq 3$ satisfies $k\neq 4$, in what follows.

%%%%%
\section{Construction of the universal cover}
\label{sec:Construction}
For the rest of this paper, we fix a connected graph $G$ with at least two vertices, an integer $k$ with $k\geq 3$ and $k\neq 4$, and a homomorphism $f\colon G\to C_k$.
Notice that the first condition implies that $G$ has no isolated vertices, and hence the notions of a positive or negative type (defined just after \cref{prop:MNP}) are well defined. Thus we can use these notions, as well as \cref{prop:local-str-of-L}, in what follows.

Our aim is to determine the homotopy type of $\Conn\bigl(\Hom(G,C_k),f\bigr)$.
It will turn out that the existence or absence of directed cycles in the digraph $\overrightarrow{G}_f$ defined in \cref{def:G-L} (1) determines the homotopy type of $\Conn\bigl(\Hom(G,C_k),f\bigr)$; see \cref{thm:homotopy-type-of-Hom}.

In this section, we construct the universal cover of $\Conn\bigl(\Hom(G,C_k),f\bigr)$.
Here is an outline of the construction.
\begin{itemize}
    \item We take a suitable quotient $D_f/k'\Z$ of the cube complex $D_f$ defined in \cref{def:G-L} (3), and observe that the quotient map $q_f\colon D_f\to D_f/k'\Z$ is a covering map. 
    \item We observe that the quotient $D_f/k'\Z$ has a natural cube complex structure, and that 
    the graph homomorphism $p^{(1)}_f\colon D^{(1)}_f\to \Hom(G,C_k)^{(1)}$ (obtained in \cref{prop:p-hom-of-graphs}) induces a graph homomorphism $r^{(1)}_f\colon (D_f/k'\Z)^{(1)}\to \Hom(G,C_k)^{(1)}$ satisfying $p^{(1)}_f=r^{(1)}_f\circ q^{(1)}_f$.
    \item We show that the above graph homomorphism $r^{(1)}_f$ is an isomorphism onto a union of connected components in $\Hom(G,C_k)^{(1)}$ (\cref{prop:r-iso-onto-union-of-conn-comp}). 
    \item We observe that both $\Hom(G,C_k)$ and $D_f/k'\Z$ are \emph{flag} prodsimplicial complexes (\cref{prop:L-quotient-flag}). This, together with the previous fact (\cref{prop:r-iso-onto-union-of-conn-comp}), implies that the graph homomorphism $r^{(1)}_f$ induces a continuous map $r_f\colon D_f/k'\Z\to \Hom(G,C_k)$, which is a homeomorphism onto a union of connected components in $\Hom(G,C_k)$.
    \item We define a continuous map $p_f\colon D_f\to \Hom(G,C_k)$ as the composite $p_f=r_f\circ q_f$.
    Restricting it to suitable connected components, we obtain a covering map $p_f\colon \Conn(D_f,\mathbf{0})\to \Conn\bigl(\Hom(G,H),f\bigr)$ (\cref{prop:p-covering}), which turns out to be the universal cover of $\Conn\bigl(\Hom(G,H),f\bigr)$ (as shown in \cref{sec:Contractibility}).
\end{itemize} 

% Recall the graph $D_f^{(1)}$ described in \cref{def:G-L} (3), the graph $\Hom(G,C_k)^{(1)}$ described in the first paragraph of \cref{sec:Introduction}, and the function $p_f^{(0)}\colon D_f^{(0)}\to \Hom(G,C_k)^{(0)}$ defined just after \cref{prop:p-well-defined}.
% The following proposition is clear. 

% \begin{proposition}
% \label{prop:p-hom-of-graphs}
%     The function $p_f^{(0)}\colon D_f^{(0)}\to \Hom(G,C_k)^{(0)}$ defines a graph homomorphism $p_f^{(1)}\colon D_f^{(1)}\to \Hom(G,C_k)^{(1)}$.
% \end{proposition}

Let us start with the definition of the quotient $D_f/k'\Z$.
We have an action of the additive group $\Z$ of integers on $\R^{V(G)}$, defined by 
\[
m\cdot (c_v)_{v\in V(G)}= (m+c_v)_{v\in V(G)}
\]
for each $m\in\Z$ and $(c_v)_{v\in V(G)}\in \R^{V(G)}$. Since this action preserves the cube complex structure of $\R^{V(G)}$ and leaves $D_f^{(0)}$ invariant, it restricts to a $\Z$-action on the cube complex $D_f$. Notice that we have a bijection $m\cdot (-)\colon D_f^{(0)}\to D_f^{(0)}$ as well as a graph automorphism $m\cdot (-)\colon D_f^{(1)}\to D_f^{(1)}$ for each $m\in\Z$.
Also note that this $\Z$-action on $D_f$ is a covering space action in the sense of \cite[Section~1.3]{Hat02}, and hence for each subgroup $m\Z$ of $\Z$ (where $m$ is a nonnegative integer), the quotient map $D_f\to D_f/m\Z$ is a covering map.

Let us now define the positive integer $k'$ by $k'=k$ if $k$ is odd, and $k'=k/2$ if $k$ is even. 
Notice that $k'$ is the smallest positive integer $m$ making the triangle
\begin{equation}
\label{eqn:comm-triangle-k'}
\begin{tikzpicture}[baseline=-\the\dimexpr\fontdimen22\textfont2\relax]
      \node(00) at (0,1) {$D_f^{(1)}$};
      \node(01) at (4,1) {$D_f^{(1)}$};
      \node(10) at (2,-1) {$\Hom(G,C_k)^{(1)}$};
      \draw [->] (00) to node[auto, labelsize] {$m\cdot(-)$} (01);
      \draw [->] (01) to node[auto, labelsize] {$p_f^{(1)}$} (10);
      \draw [->] (00) to node[auto,swap,labelsize] {$p_f^{(1)}$} (10);
\end{tikzpicture}
\end{equation}
commutative.
In other words, $k'$ is the smallest positive integer $m$ such that, after $m$ moves of length $2$ in a fixed direction in $C_k$, one returns to the original vertex.
%(formally, $k'=\min\{\,m\in\Z\mid \text{$m>0$ and $[2m]_k=[0]_k$}\,\}$). 
See \cref{fig:kprime} for an illustration. Observe that we have $k'\geq 3$ since $k \neq 4$.
\begin{figure}[h]
    \centering
\begin{equation*}
\begin{tikzpicture}
\node[draw,minimum size=2cm,regular polygon,regular polygon sides=5] (a) {};
\foreach \x in {1,2,...,5}
  \fill (a.corner \x) circle[radius=2pt];
  \begin{scope}[decoration={
    markings,
    mark=at position 0.15 with {\arrow{<}}}
    ] 
  \draw[dashed,postaction={decorate}] (a.corner 1) to (a.corner 4);
  \draw[dashed,postaction={decorate}] (a.corner 4) to (a.corner 2);
  \draw[dashed,postaction={decorate}] (a.corner 2) to (a.corner 5);
  \draw[dashed,postaction={decorate}] (a.corner 5) to (a.corner 3);
  \draw[dashed,postaction={decorate}] (a.corner 3) to (a.corner 1);
  \end{scope}
\end{tikzpicture}\qquad\qquad
\begin{tikzpicture}
\node[draw,minimum size=2cm,regular polygon,regular polygon sides=6] (a) {};
\foreach \x in {1,2,...,6}
  \fill (a.corner \x) circle[radius=2pt];
  \begin{scope}[decoration={
    markings,
    mark=at position 0.16 with {\arrow{<}}}
    ] 
  \draw[dashed,postaction={decorate}] (a.corner 1) to (a.corner 5);
  \draw[dashed,postaction={decorate}] (a.corner 5) to (a.corner 3);
  \draw[dashed,postaction={decorate}] (a.corner 3) to (a.corner 1);
  \end{scope}
\end{tikzpicture}
\end{equation*}
    \caption{Cycles with dashed arrows representing moves of length $2$ in the cases $k=5$ with $k'=5$ (left) and $k=6$ with $k'=3$ (right).}
    \label{fig:kprime}
\end{figure}

We take the quotient $q_f\colon D_f\to D_f/k'\Z$ (cf.~\cref{fig:P3Ck,fig:ConnHom}). Since $k'\geq 3$, the quotient space $D_f/k'\Z$ has the cube complex structure so that $q_f$ becomes a map of cube complexes. 
Therefore we also have the induced surjective graph homomorphism $q_f^{(1)}\colon D_f^{(1)}\to (D_f/k'\Z)^{(1)}$.
Here, the graph $(D_f/k'\Z)^{(1)}$ is the $1$-skeleton of the cube complex $D_f/k'\Z$, and can be described explicitly as follows. %Its set of vertices is $D_f^{(0)}/k'\Z$, whose elements are the orbits 
%\[
%[\bm a]_{k'}= \{\, mk'\cdot \bm a \mid m\in\Z\,\}
%\]
%of the $k'\Z$-action on $D_f^{(0)}$, where $\bm a\in D_f^{(0)}$. 
Its set of vertices is the quotient set $D_f^{(0)}/k'\Z$ consisting of equivalence classes $[\bm a]_{k'}= \{\, mk'\cdot \bm a \mid m\in\Z\,\}$, where $\bm a\in D_f^{(0)}$.
Two vertices $[\bm a]_{k'}$ and $[\bm b]_{k'}$ of $(D_f/k'\Z)^{(1)}$ are adjacent in $(D_f/k'\Z)^{(1)}$ if and only if there exist $\bm a'\in [\bm a]_{k'}$ and $\bm b'\in [\bm b]_{k'}$ such that $\bm a'$ and $\bm b'$ are adjacent in $D^{(1)}_f$. In fact, the latter condition is equivalent to the following seemingly stronger condition: \emph{for each} $\bm a'\in [\bm a]_{k'}$, there exists a \emph{unique} $\bm b'\in [\bm b]_{k'}$ such that $\bm a'$ and $\bm b'$ are adjacent in $D^{(1)}_f$.
It follows that the graph homomorphism $q_f^{(1)}\colon D_f^{(1)}\to (D_f/k'\Z)^{(1)}$, which maps a vertex $\bm a$ of $D_f^{(1)}$ to $q_f(\bm a)=[\bm a]_{k'}$, has the unique edge-lifting property. 

Since \cref{eqn:comm-triangle-k'} commutes with $m=k'$ (and since $(D_f/k'\Z)^{(1)}$ agrees with the quotient graph $D_f^{(1)}/k'\Z$ in a suitable sense), there exists a unique graph homomorphism $r_f^{(1)}\colon (D_f/k'\Z)^{(1)}\to\Hom(G,C_k)^{(1)}$ making the diagram  
\begin{equation*}
\begin{tikzpicture}[baseline=-\the\dimexpr\fontdimen22\textfont2\relax ]
      \node(00) at (0,1) {$D_f^{(1)}$};
      \node(01) at (3,1) {$(D_f/k'\Z)^{(1)}$};
      \node(10) at (3,-1) {$\Hom(G,C_k)^{(1)}$};
      
      \draw [->] (00) to node[auto, labelsize] {$q_f^{(1)}$} (01); 
      \draw [->] (01) to node[auto, labelsize] {$r_f^{(1)}$} (10); 
      \draw [->] (00) to node[auto,swap,labelsize] {$p_f^{(1)}$} (10); 
\end{tikzpicture}
\end{equation*}
commutative. Explicitly, $r_f^{(1)}$ maps a vertex $[\bm a]_{k'}$ of $(D_f/k'\Z)^{(1)}$ (with $\bm a\in D_f^{(0)}$) to $r_f([\bm a]_{k'})=p_f(\bm a)\in \Hom(G,C_k)^{(0)}$, which is well defined thanks to the commutativity of \cref{eqn:comm-triangle-k'} with $m=k'$.

\begin{proposition}
\label{prop:r-iso-onto-union-of-conn-comp}
    The graph homomorphism $r_f^{(1)}\colon (D_f/k'\Z)^{(1)}\to\Hom(G,C_k)^{(1)}$ is an isomorphism onto a union of connected components in $\Hom(G,C_k)^{(1)}$.
\end{proposition}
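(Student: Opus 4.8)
The plan is to prove the statement in two halves: first that $r_f^{(1)}$ is a \emph{local isomorphism} in a suitable sense (it induces an isomorphism on each closed neighborhood of a vertex), and second that it is \emph{injective on vertices}. Together these give that $r_f^{(1)}$ is an isomorphism onto the full subgraph of $\Hom(G,C_k)^{(1)}$ on its image, and that this image is a union of connected components.

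First I would analyze the local picture at a vertex $[\bm a]_{k'}$ of $(D_f/k'\Z)^{(1)}$. The neighbors of $\bm a$ in $D_f^{(1)}$ are exactly those $\bm a\pm\bm e_u$ that lie in $D_f^{(0)}$, and by \cref{prop:local-str-of-L} these correspond bijectively to: the sources $u$ of $\overrightarrow{G}_{p_f(\bm a)}$ (for the $+\bm e_u$ neighbors) and the sinks $u$ of $\overrightarrow{G}_{p_f(\bm a)}$ (for the $-\bm e_u$ neighbors). On the other side, the neighbors of $p_f(\bm a)$ in $\Hom(G,C_k)^{(1)}$ are the homomorphisms $g$ obtained by changing the value at a single vertex $u$; by \cref{prop:MNP} (applicable since $k\neq 4$) each such $g$ is of positive or negative type, and by \cref{prop:local-str-of-L} these are again in bijection with the sources and sinks of $\overrightarrow{G}_{p_f(\bm a)}$. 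Since $q_f^{(1)}$ has the unique edge-lifting property, the neighbors of $[\bm a]_{k'}$ in $(D_f/k'\Z)^{(1)}$ are in bijection with the neighbors of $\bm a$ in $D_f^{(1)}$; chasing $r_f^{(1)}$ through these bijections shows $r_f^{(1)}$ restricts to a bijection from the neighbors of $[\bm a]_{k'}$ onto the neighbors of $p_f(\bm a)$. A short additional check (using that changing the value at $u$ from $p_f(\bm a)(u)$ forces the new value, as in \cref{eqn:g}) shows that adjacent vertices in the image have adjacent preimages, so $r_f^{(1)}$ is a full embedding on each closed $1$-neighborhood; in particular the image of $r_f^{(1)}$ is closed under taking neighbors, hence is a union of connected components.

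Next I would prove injectivity of $r_f^{(1)}$ on vertices. Suppose $p_f(\bm a)=p_f(\bm b)$ for $\bm a,\bm b\in D_f^{(0)}$; the goal is $[\bm a]_{k'}=[\bm b]_{k'}$, i.e.\ $\bm b=\bm a+mk'\cdot\mathbf 1$ for some $m\in\Z$, where $\mathbf 1=(1)_{v\in V(G)}$. From $f(u)+[2a_u]_k=f(u)+[2b_u]_k$ for every $u$ we get $[2(a_u-b_u)]_k=[0]_k$ for all $u$, i.e.\ $k\mid 2(a_u-b_u)$, which means $k'\mid (a_u-b_u)$ for every $u$ (this is exactly where the definition of $k'$ as the order of ``$+2$'' in $\Z/k\Z$ enters: $k'$ is the smallest positive $m$ with $k\mid 2m$). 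Writing $a_u-b_u=k'c_u$, it remains to show all $c_u$ are equal. Here I use that $G$ is connected together with the defining inequalities of $D_f^{(0)}$: for any edge $uv\in E(G)$, say $(u,v)\in A(\overrightarrow{G}_f)$, we have $a_v\le a_u\le a_v+1$ and $b_v\le b_u\le b_v+1$, so $|{(a_u-b_u)}-(a_v-b_v)|\le 1$, i.e.\ $|k'(c_u-c_v)|\le 1$; since $k'\geq 3$ this forces $c_u=c_v$. Propagating along a path in the connected graph $G$ gives $c_u$ constant, say $c_u=m$, so $\bm b=\bm a-mk'\cdot\mathbf 1\in[\bm a]_{k'}$ as desired. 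Finally, injectivity of $r_f^{(1)}$ on vertices plus the local full-embedding property already established upgrades to: $r_f^{(1)}$ is injective on edges and reflects adjacency, hence is an isomorphism onto the full subgraph on its image, which we showed is a union of connected components.

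The main obstacle I expect is the bookkeeping in the local-isomorphism step: one must verify carefully that the two chains of bijections (through $q_f^{(1)}$ on one side and through \cref{prop:MNP}/\cref{prop:local-str-of-L} on the other) are compatible with $r_f^{(1)}$, and in particular that the ``positive type / source'' and ``negative type / sink'' correspondences line up with the $+\bm e_u$ and $-\bm e_u$ directions consistently — this is where a sign error would be easy to make. The injectivity step is comparatively routine once one isolates the two inputs ($k'$ as the order of doubling in $\Z/k\Z$, and connectedness of $G$ combined with the width-one defining inequalities). It should also be remarked that the hypothesis $k\neq 4$ is used twice: once via \cref{prop:MNP}/\cref{prop:local-str-of-L} in the local step, and once via $k'\geq 3$ in the injectivity step.
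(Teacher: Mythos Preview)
Your proposal is correct and takes essentially the same approach as the paper: both prove injectivity on vertices via the divisibility $k'\mid(a_u-b_u)$ together with the width-one defining inequalities and connectedness of $G$, and both handle the local step via \cref{prop:local-str-of-L}. The only cosmetic difference is that the paper phrases the local step as the \emph{edge-lifting property} of $p_f^{(1)}$ (hence of $r_f^{(1)}$) rather than as a bijection on closed neighborhoods, and then observes that injectivity plus edge-lifting already forces fullness (if $r_f(x)$ and $r_f(y)$ are adjacent, lift at $x$ to get $x'$ adjacent to $x$ with $r_f(x')=r_f(y)$, then $x'=y$ by injectivity); this streamlines your bookkeeping concern about compatibility of the two chains of bijections, since only the surjective direction is needed locally.
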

\begin{proof}
    It suffices to show that $r_f^{(1)}$ is injective and has the (unique) edge-lifting property.
    
    First we show the injectivity of $r_f^{(1)}$. Let us take $\bm a = (a_v)_{v\in V(G)}, \bm b = (b_v)_{v\in V(G)}\in D_f^{(0)}$ such that $p_f(\bm a) = p_f(\bm b)$. It suffices to show $q_f(\bm a) = q_f(\bm b)$, i.e., that there exists $m\in\Z$ with $mk'\cdot \bm a=\bm b$. For any vertex $v\in V(G)$, since we have   $f(v)+[2a_v]_k=p_f(\bm a)(v) = p_f(\bm b)(v)=f(v)+[2b_v]_k$ in $\Z/k\Z$, there exists a (necessarily unique) integer $m_v$ with $m_vk'+a_v=b_v$ in $\Z$. We claim that $uv\in E(G)$ implies $m_u=m_v$. Indeed, we may assume $(u,v)\in A(\overrightarrow{G}_f)$ without loss of generality. Then we have $a_v\leq a_u\leq a_v+1$ and $b_v\leq b_u\leq b_v+1$, which imply $m_u=m_v$ since $k'\geq 3$. It follows that the integer $m_v$ does not depend on $v$, as $G$ is connected.

    The edge-lifting property of $r_f^{(1)}$ follows from that of $p_f^{(1)}$; we show the latter. Take any $\bm a\in D_f^{(0)}$ and $g\in \Hom(G,C_k)^{(0)}$ such that $p_f(\bm a)$ and $g$ are adjacent in $\Hom(G,C_k)^{(1)}$. We want to show that there exists $\bm b\in D_f^{(0)}$ such that $\bm a$ and $\bm b$ are adjacent in $D_f^{(1)}$ and satisfies $p_f(\bm b)=g$. (Incidentally, this $\bm b$ turns out to be unique, so $p_f^{(1)}$ has the \emph{unique} edge-lifting property.)
    Let $u\in V(G)$ be the vertex of $G$ such that $p_f(\bm a)(u)\neq g(u)$. 
    Now define 
    \[
    \bm b=\begin{cases}
        \bm a + \bm e_u &\text{if $\bigl(p_f(\bm a),g\bigr)$ is of positive type, and}\\
        \bm a - \bm e_u &\text{if $\bigl(p_f(\bm a),g\bigr)$ is of negative type.}
    \end{cases}
    \]
    Then we have $\bm b\in D_f^{(0)}$ by \cref{prop:local-str-of-L}. 
    Clearly $\bm a$ and $\bm b$ are adjacent in $D_f^{(1)}$ and we have $p_f(\bm b)=g$.
\end{proof}

Our next aim is to show that the graph homomorphism $r_f^{(1)}\colon (D_f/k'\Z)^{(1)}\to \Hom(G,C_k)^{(1)}$ extends to a map $r_f\colon D_f/k'\Z\to \Hom(G,C_k)$ of cube complexes, which is a homeomorphism onto a union of connected components in $\Hom(G,C_k)$. This would follow from \cref{prop:r-iso-onto-union-of-conn-comp}, once we know that both cube complexes $D_f/k'\Z$ and $\Hom(G,C_k)$ are flag prodsimplicial complexes. Since $\Hom(G,C_k)$ is a flag prodsimplicial complex by \cite[Proposition~18.1]{Koz08}, it suffices to show that $D_f/k'\Z$ is a flag prodsimplicial complex.

\begin{lemma}
\label{lem:square-in-L-quotient}
    Let $x_0,x_1,x_2,x_3\in (D_f/k'\Z)^{(0)}$ be vertices such that the induced subgraph of $(D_f/k'\Z)^{(1)}$ determined by $\{x_0,x_1,x_2,x_3\}$ is isomorphic to the square $C_4$, such that $x_i$ and $x_j$ are adjacent if and only if $[i-j]_4\in\{\pm [1]_4\}$. 
    Then, for any $\bm a\in D_f^{(0)}$ with $x_0=[\bm a]_{k'}$, there exist unique $u,v\in V(G)$ and unique $\delta,\varepsilon\in \{\pm 1\}$ such that $x_1=[\bm a+\delta\bm e_u]_{k'}$, $x_2=[\bm a+\delta\bm e_u+\varepsilon\bm e_v]_{k'}$, and $x_3=[\bm a+\varepsilon\bm e_v]_{k'}$. Moreover, we have $u\neq v$. 
\end{lemma}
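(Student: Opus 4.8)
The plan is to transport the square to the Hom complex side, where the structure of an induced $4$-cycle is classical, and then pull the lifts back using the unique edge-lifting property of $q_f^{(1)}$ together with the injectivity of $r_f^{(1)}$.

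First I would pass to the image square. By \cref{prop:r-iso-onto-union-of-conn-comp}, $r_f^{(1)}$ is an isomorphism onto a union of connected components, so it sends $\{x_0,x_1,x_2,x_3\}$ to an induced square $\{g_0,g_1,g_2,g_3\}$ of $\Hom(G,C_k)^{(1)}$, where $g_i=r_f(x_i)$; in particular $g_0=p_f(\bm a)$. Let $u\in V(G)$ be the unique vertex with $g_0(u)\neq g_1(u)$. From the standard analysis of an induced $4$-cycle in a graph (using only that the $g_i$ are distinct, $g_0\not\sim g_2$, and $g_1\not\sim g_3$) I would extract exactly what I need: there is a unique vertex $v$ with $g_1(v)\neq g_2(v)$; moreover $v\neq u$ (otherwise $g_0$ and $g_2$ would differ in at most one vertex, hence be equal or adjacent); and $g_1(v)=g_0(v)$ and $g_2(v)=g_3(v)$, since in such a square $g_0,g_1$ differ only at $u$ and $g_2,g_3$ differ only at $u$.

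Next I would lift the edges of the square one by one, starting at $\bm a$. By the unique edge-lifting property of $q_f^{(1)}$, there is a unique $\bm a_1\in D_f^{(0)}$ adjacent to $\bm a$ with $q_f(\bm a_1)=x_1$; being adjacent to $\bm a$ it has the form $\bm a_1=\bm a+\delta\bm e_w$ for unique $w\in V(G)$ and $\delta\in\{\pm1\}$. Since $p_f(\bm a_1)=g_1$ and, by \cref{prop:p-well-defined}, $p_f(\bm a+\delta\bm e_w)$ differs from $p_f(\bm a)=g_0$ precisely at $w$ (here $k\geq3$ enters, so that $\delta[2]_k\neq[0]_k$), I get $w=u$, i.e.\ $\bm a_1=\bm a+\delta\bm e_u$. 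Repeating the step with $\bm a_1$ and $x_2$ in place of $\bm a$ and $x_1$ produces a unique $\bm a_{12}=\bm a_1+\varepsilon\bm e_v$ with $q_f(\bm a_{12})=x_2$, where $v$ is identified (just as above) with the vertex at which $g_1,g_2$ differ; since $u\neq v$ this gives $x_2=[\bm a+\delta\bm e_u+\varepsilon\bm e_v]_{k'}$. Lifting the edge $x_0x_3$ at $\bm a$ likewise yields a unique $\bm a_3=\bm a+\varepsilon'\bm e_{v'}$ with $q_f(\bm a_3)=x_3$; since $g_3=p_f(\bm a_3)$ differs from $g_0$ at exactly one vertex, that vertex must be $v$, so $v'=v$, and comparing $g_3(v)=f(v)+[2a_v]_k+\varepsilon'[2]_k$ with $g_2(v)=f(v)+[2a_v]_k+\varepsilon[2]_k$ through $g_2(v)=g_3(v)$ and $k\nmid4$ forces $\varepsilon'=\varepsilon$. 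This gives the three required equalities together with $u\neq v$, and uniqueness of $(u,\delta,v,\varepsilon)$ is immediate: any admissible tuple makes $\bm a+\delta\bm e_u$ the unique edge-lift of $x_1$ at $\bm a$ and $\bm a+\varepsilon\bm e_v$ the unique edge-lift of $x_3$ at $\bm a$, which pin down $(u,\delta)$ and $(v,\varepsilon)$.

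The hard part is matching the two local orientations: ensuring the sign $\varepsilon$ obtained by going $x_0\to x_1\to x_2$ agrees with the one obtained from $x_0\to x_3$. The two arithmetic facts that resolve this — that $p_f(\bm a\pm\bm e_w)$ differs from $p_f(\bm a)$ precisely at $w$, and that $\varepsilon[2]_k=\varepsilon'[2]_k$ with $\varepsilon,\varepsilon'\in\{\pm1\}$ forces $\varepsilon=\varepsilon'$ — depend on $k\geq3$ and $k\neq4$ essentially, which is precisely where these standing hypotheses are used. Everything else is routine bookkeeping with the unique edge-lifting property of $q_f^{(1)}$ and the injectivity of $r_f^{(1)}$.
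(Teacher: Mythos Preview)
Your proof is correct but takes a genuinely different route from the paper. The paper argues directly inside $D_f$: having written $x_1=[\bm a+\delta\bm e_u]_{k'}$ and $x_3=[\bm a+\varepsilon\bm e_v]_{k'}$ via unique edge-lifting (uniqueness from $k'\geq 3$), it rules out $u=v$ by invoking \cref{prop:local-str-of-L}: if $u=v$ then $\{x_1,x_3\}=\{[\bm a-\bm e_u]_{k'},[\bm a+\bm e_u]_{k'}\}$, which would force both $\bm a\pm\bm e_u\in D_f^{(0)}$, i.e.\ $u$ would be simultaneously a source and a sink in $\overrightarrow{G}_{p_f(\bm a)}$, impossible as $G$ has no isolated vertices. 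The expression for $x_2$ is then declared ``forced'' with no further detail. Your approach instead pushes the square forward along $r_f^{(1)}$ using \cref{prop:r-iso-onto-union-of-conn-comp}, analyzes the induced $C_4$ in $\Hom(G,C_k)^{(1)}$ to obtain $u\neq v$ (via the non-adjacency of $g_0$ and $g_2$) and to identify the vertices at which each pair differs, and then matches the signs by arithmetic in $\Z/k\Z$ using $k\nmid 4$. The paper's argument is shorter and exploits the source/sink dichotomy of \cref{prop:local-str-of-L}; yours is longer but spells out why $x_2$ and the sign $\varepsilon$ are determined, and it is logically sound since \cref{prop:r-iso-onto-union-of-conn-comp} is established beforehand and does not depend on this lemma.
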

\begin{proof}
    Assume that we have $x_0=[\bm a]_{k'}$. Then, we can write $x_1=[\bm a+\delta\bm e_u]_{k'}$ and $x_3=[\bm a+\varepsilon\bm e_v]_{k'}$ for some $u,v\in V(G)$ and some $\delta,\varepsilon\in \{\pm 1\}$. The uniqueness of $u,v,\delta,\varepsilon$ follows from $k'\geq 3$. If $u=v$, then we have $\{x_1,x_3\}= \{[\bm a-\bm e_u]_{k'}, [\bm a+\bm e_u]_{k'}\}$.
    However, this is impossible because by \cref{prop:local-str-of-L} we cannot have $\bm a-\bm e_u,\bm a + \bm e_u\in D_f^{(0)}$; note that a vertex in $G$ cannot be both a source and a sink in $\overrightarrow{G}_{p_f(\bm a)}$ since $G$ has no isolated vertices.
    Therefore we have $u\neq v$. 
    This forces $x_2$ to be $[\bm a+\varepsilon\bm e_u +\delta\bm e_v]_{k'}$.
\end{proof}

\begin{proposition}
\label{prop:L-quotient-flag}
    The cube complex $D_f/k'\Z$ is a flag prodsimplicial complex.
\end{proposition}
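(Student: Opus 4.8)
The plan is to observe first that the only work left is the \emph{flag} property. Indeed, we already know that $D_f/k'\Z$ carries a cube complex structure making $q_f\colon D_f\to D_f/k'\Z$ a map of cube complexes, so it is a prodsimplicial complex all of whose cell shapes are hypercubes. For a cube complex, being flag unwinds to two assertions: (i) the graph $(D_f/k'\Z)^{(1)}$ is triangle-free — this disposes of every product of simplices having a factor $\Delta^{d}$ with $d\geq 2$, since the $1$-skeleton of such a product contains a triangle whereas $D_f/k'\Z$ has no cell of that shape; and (ii) for each $d\geq 0$, the assignment $C\mapsto C^{(1)}$ is a bijection from the $d$-cells of $D_f/k'\Z$ to the induced subgraphs of $(D_f/k'\Z)^{(1)}$ isomorphic to the $1$-skeleton $Q_d$ of the $d$-cube.

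Both parts rely on the fact that the $k'\Z$-action on $D_f$ is ``spread out'': since $k'\geq 3$ and $|V(G)|\geq 2$, two distinct points of $\R^{V(G)}$ in one $k'\Z$-orbit differ, in \emph{every} coordinate, by a nonzero multiple of $k'$, hence by at least $3$ in absolute value. For (i), a triangle in $(D_f/k'\Z)^{(1)}$ would lift, after fixing a lift of one of its vertices, to a triangle in $D_f^{(1)}$: each of its three edges has endpoints within $\ell_1$-distance $2$ of the fixed lift, too close to be ``wrapped around'' by a nontrivial element of the action, so the lifted edges close up; but $D_f^{(1)}\subseteq\Z^{V(G)}$ is bipartite, a contradiction.

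For (ii), let $H$ be an induced subgraph of $(D_f/k'\Z)^{(1)}$ isomorphic to $Q_d$ (the cases $d\leq 1$ being trivial, assume $d\geq 2$). Fix a vertex $x_0$ of $H$, a lift $\bm a_0\in D_f^{(0)}$ of $x_0$, and, for each of the $d$ neighbours $x_1,\dots,x_d$ of $x_0$ in $H$, the unique lift $\bm a_0+\eta_i\bm e_{u_i}\in D_f^{(0)}$ adjacent to $\bm a_0$ (so $u_i\in V(G)$, $\eta_i\in\{\pm 1\}$). For $i\neq j$ the vertices $x_0,x_i,x_j$ lie on a unique induced $4$-cycle of $H$, the fourth vertex being the second common neighbour of $x_i,x_j$ (which exists and is unique, as in $Q_d$ two neighbours of a vertex have exactly two common neighbours); being induced in $H$ it is induced in $(D_f/k'\Z)^{(1)}$, so \cref{lem:square-in-L-quotient} applied with the lift $\bm a_0$ of $x_0$ gives $u_i\neq u_j$ and $\bm a_0+\eta_i\bm e_{u_i}+\eta_j\bm e_{u_j}\in D_f^{(0)}$. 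The key step is to promote this to all subsets: writing $\bm a_I=\bm a_0+\sum_{i\in I}\eta_i\bm e_{u_i}$ for $I\subseteq\{1,\dots,d\}$, and observing that each defining inequality of $D_f^{(0)}$ involves only two coordinates $u,v$, while — the $u_i$ being pairwise distinct — the $u$- and $v$-entries of $\bm a_I$ depend only on $I\cap\{i:u_i\in\{u,v\}\}$, a set of size $\leq 2$, membership $\bm a_I\in D_f^{(0)}$ reduces to the cases $|I|\leq 2$ already handled. Hence all $2^d$ points $\bm a_I$ lie in $D_f^{(0)}$; since the $u_i$ are distinct, they are precisely the vertices of a unit $d$-cube of $\R^{V(G)}$, which (being in $D_f^{(0)}$) is a cell $\tilde C$ of the induced cube subcomplex $D_f$. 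As the $\bm a_I$ pairwise differ by at most $1$ in each coordinate, no two are $k'\Z$-equivalent, so $q_f$ is injective on $\tilde C$ and $q_f(\tilde C)$ is a $d$-cell of $D_f/k'\Z$; an induction on $|I|$ — again using that in $Q_d$ a pair of vertices at distance $2$ has exactly two common neighbours, together with the just-proved injectivity — identifies $[\bm a_I]$ with the vertex of $H$ in the corresponding position, giving $q_f(\tilde C)^{(1)}=H$. Conversely, a $d$-cell $C$ of $D_f/k'\Z$ lifts near one of its vertices $x_0$ to a unit $d$-cube $\tilde C$ of $D_f$, on which $q_f$ is injective by the same ``spread out'' estimate, whence $q_f(\tilde C)^{(1)}=C^{(1)}$ is an induced copy of $Q_d$; and since $\tilde C$ is determined by $C^{(1)}$ and the chosen lift of $x_0$ (the $d$ neighbours of $x_0$ lift uniquely, fixing the edge-directions of $\tilde C$ at $\bm a_0$), with different lifts of $x_0$ giving $k'\Z$-translates of $\tilde C$ and hence the same image $C$, the assignment $C\mapsto C^{(1)}$ is injective.

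I expect the main obstacle to be exactly the promotion step in (ii): upgrading $\bm a_I\in D_f^{(0)}$ from $|I|\leq 2$ to all $I$. This is where the two-variable form of the defining inequalities of $D_f^{(0)}$ and the distinctness of the $u_i$ (supplied by \cref{lem:square-in-L-quotient}) combine to make the local square data determine all higher-dimensional cubes; the remaining ingredients — the triangle-freeness, the elementary separation estimates from $k'\geq 3$ and $|V(G)|\geq 2$, and the bookkeeping matching $q_f(\tilde C)^{(1)}$ with $H$ — should be routine by comparison.
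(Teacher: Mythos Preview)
Your proof is correct and follows the paper's overall approach: rule out non-cube shapes via triangle-freeness of $(D_f/k'\Z)^{(1)}$, then for an induced copy of $Q_d$ lift a base vertex to $\bm a_0\in D_f^{(0)}$, use \cref{lem:square-in-L-quotient} to write its neighbours as $[\bm a_0+\eta_i\bm e_{u_i}]_{k'}$ with the $u_i$ pairwise distinct, and exhibit a $d$-cube in $D_f$ whose image under $q_f$ has $1$-skeleton $H$.

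The one substantive difference is in the promotion step. The paper obtains all $\bm a_I\in D_f^{(0)}$ by applying \cref{lem:square-in-L-quotient} \emph{repeatedly}, walking through the hypercube $H$ one square at a time and thereby simultaneously identifying each vertex of $H$ with some $[\bm a_I]_{k'}$. You instead observe that every defining inequality of $D_f^{(0)}$ involves only two coordinates, so once the $u_i$ are known to be pairwise distinct the instances with $|I|\leq 2$ (already supplied by \cref{lem:square-in-L-quotient}) settle all of them; you then separately match $[\bm a_I]_{k'}$ with the vertices of $H$. Your route is a little more direct at this point and makes transparent why the square data suffice; you are also more explicit than the paper about the injectivity half of the bijection $C\mapsto C^{(1)}$, which the paper leaves implicit.
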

\begin{proof}
    Let $H$ be an induced subgraph of $(D_f/k'\Z)^{(1)}$ isomorphic to the $1$-skeleton of a product of simplices. First observe that $(D_f/k'\Z)^{(1)}$ does not contain a triangle $K_3$ since $k'\geq 3$ and $|V(G)|\geq 2$. Therefore $H$ must be the $1$-skeleton of a $d$-cube for some $d\geq 0$. It suffices to show that there exists a $d$-cube in $D_f$ whose vertices are mapped to the vertices of $H$ by $q_f\colon D_f\to D_f/k'\Z$.
    
    Take $\bm a\in D_f^{(0)}$ such that $[\bm a]_{k'}$ is a vertex of $H$. The $d$ vertices of $H$ adjacent to $[\bm a]_{k'}$ can be expressed as $[\bm a+ \varepsilon_1 \bm e_{v_1}]_{k'}, \dots, [\bm a+ \varepsilon_d \bm e_{v_d}]_{k'}$ for some $\varepsilon_1,\dots, \varepsilon_d\in \{\pm 1\}$ and some pairwise distinct $v_1,\dots,v_d\in V(G)$ by \cref{lem:square-in-L-quotient}. Applying \cref{lem:square-in-L-quotient} repeatedly, we see that the set of $2^d$ vertices of $H$ can be written as 
    $\{\,[\bm{a}+\delta_{1}\varepsilon_{1}\bm{e}_{v_1}+\dots+\delta_{d}\varepsilon_{d}\bm{e}_{v_d}]_{k'}\mid \delta_{i}\in \{0,1\}\text{ for all }1\leq i\leq d\,\}$.
    Since $\{\,\bm{a}+\delta_{1}\varepsilon_{1}\bm{e}_{v_1}+\dots+\delta_{d}\varepsilon_{d}\bm{e}_{v_d}\mid \delta_{i}\in \{0,1\}\text{ for all }1\leq i\leq d\,\}$ is the set of vertices of a $d$-cube in $D_f$, we see that $D_f/k'\Z$ contains a $d$-cube having $H$ as its $1$-skeleton. 
\end{proof}

Thus we obtain a continuous map $r_f\colon D_f/k'\Z\to \Hom(G,C_k)$. 
Define the continuous map $p_f\colon D_f\to \Hom(G,C_k)$ as the composite of $D_f\xrightarrow{q_f}D_f/k'\Z\xrightarrow{r_f}\Hom(G,C_k)$.
We consider its restriction $p_f\colon \Conn(D_f,\bm 0)\to \Conn\bigl(\Hom(G,C_k),f\bigr)$, where $\bm 0=(0)_{v\in V(G)}$. 
\begin{proposition}\label{prop:p-covering}
    The continuous map $p_f\colon \Conn(D_f,\bm 0)\to \Conn\bigl(\Hom(G,C_k),f\bigr)$ is a covering map.
\end{proposition}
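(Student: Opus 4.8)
The plan is to assemble $p_f$ as a covering map out of the two maps already constructed, namely the covering map $q_f\colon D_f\to D_f/k'\Z$ (which is a covering map because the $\Z$-action on $D_f$ is a covering space action, as noted above) and the map $r_f\colon D_f/k'\Z\to \Hom(G,C_k)$, which by \cref{prop:r-iso-onto-union-of-conn-comp}, \cref{prop:L-quotient-flag}, and \cite[Proposition~18.1]{Koz08} is a homeomorphism onto a union $Y$ of connected components of $\Hom(G,C_k)$. Since $p_f=r_f\circ q_f$ and $r_f$ is an open embedding with open-and-closed image $Y$, the map $p_f\colon D_f\to Y$ is again a covering map, being a covering map postcomposed with a homeomorphism onto its image. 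Moreover $p_f(\bm 0)=f$ by the defining formula in \cref{prop:p-well-defined} (as $[2\cdot 0]_k=[0]_k$), so $f\in Y$ and $\Conn\bigl(\Hom(G,C_k),f\bigr)=\Conn(Y,f)$ is one of the components making up $Y$.

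It then remains to pass from the covering map $p_f\colon D_f\to Y$ to its restriction over the single base component $\Conn(Y,f)$ and to the single total-space component $\Conn(D_f,\bm 0)$. Here I would invoke a standard point-set fact: if $p\colon E\to B$ is a covering map and $B$ is locally path-connected, then (i) the image of each connected component of $E$ is a connected component of $B$, and (ii) the corestriction of $p$ to such a component of $E$, mapped onto the corresponding component of $B$, is again a covering map. All the spaces involved --- $D_f$, $D_f/k'\Z$, and $\Hom(G,C_k)$ --- are polyhedral (cube or prodsimplicial) complexes, hence locally path-connected, so this applies. Concretely, $p_f^{-1}\bigl(\Conn(Y,f)\bigr)$ is open and closed in $D_f$, hence a union of components of $D_f$, one of which is $\Conn(D_f,\bm 0)$ since $p_f(\bm 0)=f$; by (i), $p_f$ carries $\Conn(D_f,\bm 0)$ onto $\Conn(Y,f)=\Conn\bigl(\Hom(G,C_k),f\bigr)$; and restricting $p_f$ first over $\Conn(Y,f)$ and then to the component $\Conn(D_f,\bm 0)$ keeps it a covering map by (ii). This is the desired conclusion.

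The only genuinely nontrivial ingredient is the point-set lemma on restricting coverings, and within it the claim that the image $p(C)$ of a component $C\subseteq E$ is closed (not merely open, which is immediate since covering maps are open): given $b$ in the closure of $p(C)$, one takes a connected, evenly-covered neighbourhood $U$ of $b$, observes that some sheet over $U$ meets $C$ and is therefore contained in $C$ (being connected and $C$ a component), whence $U=p(\text{that sheet})\subseteq p(C)$ and $b\in p(C)$. Either writing this out or citing it appropriately is the main, though routine, part of the argument; everything else is bookkeeping with the already-established properties of $q_f$ and $r_f$.
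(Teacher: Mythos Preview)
Your proof is correct and follows essentially the same approach as the paper: factor $p_f$ as $r_f\circ q_f$, use that $q_f$ is a covering map and $r_f$ is a homeomorphism onto a union of connected components, and then restrict to the relevant components. The paper is terser about the restriction step (it simply asserts that restricting $q_f$ and $r_f$ to the indicated components yields a covering map and a homeomorphism, respectively), whereas you spell out the standard point-set lemma justifying this; but the underlying argument is the same.
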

\begin{proof}
    Note that $p_f\colon \Conn(D_f,\bm 0)\to \Conn\bigl(\Hom(G,C_k),f\bigr)$ is the composite of 
    \begin{equation}
    \label{eqn:p-decomposition}
    \Conn(D_f,\bm 0)\xrightarrow{q_f} \Conn(D_f/k'\Z, [\bm 0]_{k'})\xrightarrow{r_f} \Conn\bigl(\Hom(G,C_k),f\bigr).
    \end{equation}
    Since $q_f\colon D_f\to D_f/k'\Z$ is a covering map, the first factor in \cref{eqn:p-decomposition} is a covering map. 
    Since $r_f\colon D_f/k'\Z\to \Hom(G,C_k)$ is a homeomorphism onto a union of connected components in $\Hom(G,C_k)$, the second factor in \cref{eqn:p-decomposition} is a homeomorphism. 
    % Since $q_f\colon D_f\to D_f/k'\Z$ is a covering map, restriction $q_f\colon \Conn(D_f,\bm 0)\to \Conn(D_f/k'\Z, [\bm 0]_{k'})$ is also a covering map. 
    % Since $r_f\colon D_f/k'\Z\to \Hom(G,C_k)$ is a homeomorphism onto a union of connected components in $\Hom(G,C_k)$, the restriction $r_f\colon \Conn(D_f/k'\Z,[\bm 0]_{k'})\to \Conn\bigl(\Hom(G,C_k), f\bigr)$ is a homeomorphism. 
    % Therefore $p_f\colon \Conn(D_f,\bm 0)\to \Conn\bigl(\Hom(G,C_k),f\bigr)$, being the composite of $q_f\colon \Conn(D_f,\bm 0)\to \Conn(D_f/k'\Z, [\bm 0]_{k'})$ and $r_f\colon \Conn(D_f/k'\Z,[\bm 0]_{k'})\to \Conn\bigl(\Hom(G,C_k), f\bigr)$, is a covering map. 
\end{proof}
We shall show in \cref{prop:contractible} that $\Conn(D_f,\bm 0)$ is contractible, which implies that $p_f\colon \Conn(D_f,\bm 0)\to \Conn\bigl(\Hom(G,C_k),f\bigr)$ is the universal cover of $\Conn\bigl(\Hom(G,C_k),f\bigr)$.

\section{Contractibility of the universal cover}
\label{sec:Contractibility}
In order to show the contractibility of $\Conn(D_f,\bm 0)$, we first give a more explicit description of $\Conn(D_f,\bm 0)$. 
Define $E_f^{(0)}\subseteq \Z^{V(G)}$ by
\begin{align*}
E_f^{(0)} =D_f^{(0)}\cap \{\,(a_v)_{v\in V(G)}\in \Z^{V(G)}\mid \text{$a_v=0$ for each $v\in V(G)$ in a directed cycle in $\overrightarrow{G}_f$}\,\}
\end{align*}
and define $E_f$ to be the induced cube subcomplex of $D_f$ with vertex set $E_f^{(0)}$. Our first goal in this section is to show \cref{prop:E-alpha-as-Conn}, which asserts that we have $\Conn(D_f,\bm 0)=E_f$.

\begin{lemma}
\label{prop:L-alpha-and-L-p-x}
    For each $\bm{b}=(b_v)_{v\in V(G)}\in D_f^{(0)}$, we have $-\bm b=(-b_v)_{v\in V(G)}\in D_{p_f(\bm b)}^{(0)}$.
\end{lemma}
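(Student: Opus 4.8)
The plan is to verify the defining inequalities of $D_{p_f(\bm b)}^{(0)}$ directly for the tuple $-\bm b$, using the characterization of the arc set of $\overrightarrow{G}_{p_f(\bm b)}$ provided by \cref{prop:p-well-defined}. First I would fix an edge $uv\in E(G)$ and ask which orientation it receives in $\overrightarrow{G}_{p_f(\bm b)}$; by definition this depends on whether $p_f(\bm b)(v)-p_f(\bm b)(u)=[1]_k$ or $=[-1]_k$. By symmetry it is enough to treat one of the two orientations of $uv$ under $\overrightarrow{G}_f$, say $(u,v)\in A(\overrightarrow{G}_f)$, so that $b_v\le b_u\le b_v+1$ holds because $\bm b\in D_f^{(0)}$.

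Next I would split into the two cases $b_u=b_v$ and $b_u=b_v+1$ and invoke \cref{eqn:p-alpha-a-cases}: in the first case $p_f(\bm b)(v)-p_f(\bm b)(u)=[1]_k$, so $(u,v)\in A(\overrightarrow{G}_{p_f(\bm b)})$, and the relevant defining inequality for $-\bm b$ is $-b_v\le -b_u\le -b_v+1$, which reads $0\le 0\le 1$ — true. In the second case $p_f(\bm b)(v)-p_f(\bm b)(u)=[-1]_k$, i.e.\ $(v,u)\in A(\overrightarrow{G}_{p_f(\bm b)})$, and the defining inequality for $-\bm b$ is $-b_u\le -b_v\le -b_u+1$, which reads $-(b_v+1)\le -b_v\le -(b_v+1)+1=-b_v$ — again true. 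The case $(v,u)\in A(\overrightarrow{G}_f)$ is handled by the same computation with the roles of $u$ and $v$ exchanged. Since every edge of $G$ contributes a satisfied pair of inequalities, we conclude $-\bm b\in D_{p_f(\bm b)}^{(0)}$.

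I do not expect a genuine obstacle here: the statement is essentially the bookkeeping observation that negating the coordinates reverses every arc of $\overrightarrow{G}_f$ exactly when the corresponding coordinate difference is $0$ rather than $1$, which is precisely what \cref{prop:p-well-defined} records. The only point that needs a little care is making sure the orientation of $uv$ in $\overrightarrow{G}_{p_f(\bm b)}$ is read off correctly from $p_f(\bm b)$ rather than from $f$ — hence the reliance on \cref{eqn:p-alpha-a-cases} rather than on the original digraph $\overrightarrow{G}_f$. One can also phrase the whole argument more slickly by noting that $\overrightarrow{G}_{p_f(\bm b)}$ is obtained from $\overrightarrow{G}_f$ by reversing exactly those arcs $(u,v)$ with $b_u=b_v+1$, and that reversing an arc corresponds to swapping the two inequalities $x_v\le x_u\le x_v+1$ for $x_u\le x_v\le x_u+1$; negating coordinates then interchanges these two forms in exactly the matching way.
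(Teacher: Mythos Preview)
Your proposal is correct and follows essentially the same approach as the paper: fix an edge, assume without loss of generality that $(u,v)\in A(\overrightarrow{G}_f)$, split into the two cases $b_u=b_v$ and $b_u=b_v+1$, and use \cref{eqn:p-alpha-a-cases} to read off the orientation of $uv$ in $\overrightarrow{G}_{p_f(\bm b)}$ before checking the relevant inequality for $-\bm b$. The paper is slightly terser in the first case (it simply notes $-b_u=-b_v$ suffices without naming the orientation), but the argument is the same.
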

\begin{proof}
    Take any $uv\in E(G)$. We may assume $(u,v)\in A(\overrightarrow{G}_f)$ without loss of generality. Then we have $b_v\leq b_u\leq b_v + 1$. 
    If $b_u=b_v$, then $-b_u=-b_v$ and hence $-\bm b$ satisfies the $uv$-th defining inequalities of $D_{p_f(\bm b)}^{(0)}$. Suppose that $b_u=b_v+1$. Then we have $p_f(\bm b) (v)- p_f(\bm b) (u)=-[1]_k$ by \cref{eqn:p-alpha-a-cases}. Therefore we have $(v,u)\in A(\overrightarrow{G}_{p_f(\bm b)})$ and the $uv$-th defining inequalities of $D_{p_f(\bm b)}^{(0)}$ are $x_u\leq x_v\leq x_u+1$, which are satisfied by $-\bm b$.
\end{proof}

\begin{corollary}
\label{cor:symmetry}
    Suppose that $g,h\colon G\to C_k$ are homomorphisms. Then, there exists $\bm{b}\in D_g^{(0)}$ such that $p_g(\bm b) =h$ if and only if there exists $\bm{c}\in D_h^{(0)}$ such that $p_h(\bm c) =g$. 
\end{corollary}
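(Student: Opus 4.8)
The statement is symmetric in $g$ and $h$, so the plan is to prove just one implication, say that the existence of $\bm b\in D_g^{(0)}$ with $p_g(\bm b)=h$ forces the existence of $\bm c\in D_h^{(0)}$ with $p_h(\bm c)=g$; the converse then follows immediately by exchanging the roles of $g$ and $h$.

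So suppose $\bm b=(b_v)_{v\in V(G)}\in D_g^{(0)}$ satisfies $p_g(\bm b)=h$. The natural candidate for $\bm c$ is $-\bm b=(-b_v)_{v\in V(G)}$. First, \cref{prop:L-alpha-and-L-p-x} applied with $f=g$ tells us that $-\bm b\in D_{p_g(\bm b)}^{(0)}=D_h^{(0)}$, so $-\bm b$ indeed lies in the correct complex. It then remains only to verify that $p_h(-\bm b)=g$, which is a one-line computation in $\Z/k\Z$: using the defining formula from \cref{prop:p-well-defined} together with the hypothesis $h=p_g(\bm b)$, for each $u\in V(G)$ we have
\[
p_h(-\bm b)(u)=h(u)+[2(-b_u)]_k=\bigl(g(u)+[2b_u]_k\bigr)-[2b_u]_k=g(u).
\]
Hence $\bm c=-\bm b$ does the job, completing the forward implication.

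I expect no genuine obstacle here: essentially all the content is already packaged in \cref{prop:L-alpha-and-L-p-x}, and what remains is just the arithmetic identity $p_h(-\bm b)=g$, which is immediate from the definition of $p_f$. The one point worth a moment's care is that negating $\bm b$ and simultaneously reversing the relevant edge orientations really does turn the defining inequalities of $D_g^{(0)}$ into those of $D_h^{(0)}$ — but that is exactly the assertion of \cref{prop:L-alpha-and-L-p-x}, so no further work is needed.
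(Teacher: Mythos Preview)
Your proof is correct and follows essentially the same approach as the paper: set $\bm c=-\bm b$, invoke \cref{prop:L-alpha-and-L-p-x} to get $-\bm b\in D_h^{(0)}$, and check $p_h(-\bm b)=g$. The only difference is that you write out the verification $p_h(-\bm b)=g$ explicitly, whereas the paper dismisses it with ``clearly.''
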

\begin{proof}
    Suppose that there exists $\bm{b}\in D_g^{(0)}$ such that $p_g(\bm b) =h$.
    Then by \cref{prop:L-alpha-and-L-p-x}, we have $-\bm b\in D_{p_g(\bm b)}^{(0)}=D_{h}^{(0)}$. Clearly we have $p_h(-\bm b)=g$. 
\end{proof}

\begin{lemma}
\label{lem:directed-cycles-remain-invariant}
    For each $\bm a=(a_v)_{v\in V(G)}\in D_f^{(0)}$, the directed cycles in $\overrightarrow{G}_f$ and in $\overrightarrow{G}_{p_f(\bm a)}$ coincide.
\end{lemma}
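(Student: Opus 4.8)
The plan is to show, more strongly, that an edge $uv\in E(G)$ lies in a directed cycle of $\overrightarrow{G}_f$ if and only if it lies in a directed cycle of $\overrightarrow{G}_{p_f(\bm a)}$, and in fact that the relevant edges keep their orientation. Fix $\bm a=(a_v)_{v\in V(G)}\in D_f^{(0)}$. For any edge $uv\in E(G)$, without loss of generality assume $(u,v)\in A(\overrightarrow{G}_f)$; then by definition of $D_f^{(0)}$ we have $a_v\leq a_u\leq a_v+1$, so either $a_u=a_v$ or $a_u=a_v+1$. By \cref{eqn:p-alpha-a-cases}, in the first case $(u,v)\in A(\overrightarrow{G}_{p_f(\bm a)})$ and in the second case $(v,u)\in A(\overrightarrow{G}_{p_f(\bm a)})$. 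So the orientation of an edge of $G$ is \emph{reversed} by passing from $f$ to $p_f(\bm a)$ precisely on those edges $uv$ where $a_u\neq a_v$.

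Now I claim that on any directed cycle $C=(u_0,u_1,\dots,u_\ell=u_0)$ of $\overrightarrow{G}_f$, the function $\bm a$ is constant. Indeed, each arc $(u_{i-1},u_i)\in A(\overrightarrow{G}_f)$ forces $a_{u_i}\leq a_{u_{i-1}}\leq a_{u_i}+1$, so $a_{u_0}\geq a_{u_1}\geq \dots\geq a_{u_\ell}=a_{u_0}$, whence all these values are equal. In particular $a_{u_{i-1}}=a_{u_i}$ for every $i$, so by the dichotomy of the previous paragraph every arc of $C$ retains its orientation in $\overrightarrow{G}_{p_f(\bm a)}$; thus $C$ is also a directed cycle of $\overrightarrow{G}_{p_f(\bm a)}$. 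This shows every directed cycle of $\overrightarrow{G}_f$ is a directed cycle of $\overrightarrow{G}_{p_f(\bm a)}$.

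For the reverse inclusion, I would invoke the symmetry already built up in the excerpt: by \cref{prop:L-alpha-and-L-p-x} we have $-\bm a\in D_{p_f(\bm a)}^{(0)}$, and clearly $p_{p_f(\bm a)}(-\bm a)=f$. Applying the inclusion just proved, with $p_f(\bm a)$ in place of $f$ and $-\bm a$ in place of $\bm a$, every directed cycle of $\overrightarrow{G}_{p_f(\bm a)}$ is a directed cycle of $\overrightarrow{G}_{p_{p_f(\bm a)}(-\bm a)}=\overrightarrow{G}_f$. Combining the two inclusions gives the claimed coincidence. I do not anticipate a serious obstacle here; the only point requiring a little care is the bookkeeping that a directed cycle of $\overrightarrow{G}_f$ stays a directed cycle (not merely an undirected cycle) of $\overrightarrow{G}_{p_f(\bm a)}$, which is exactly what the constancy of $\bm a$ along such a cycle delivers, and the clean way to get the opposite inclusion is the $-\bm a$ symmetry rather than a second direct argument.
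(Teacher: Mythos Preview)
Your proof is correct and follows essentially the same approach as the paper: show that $\bm a$ is constant along any directed cycle of $\overrightarrow{G}_f$ via the chain of inequalities, hence such cycles persist in $\overrightarrow{G}_{p_f(\bm a)}$, and then invoke the $-\bm a$ symmetry (the paper's \cref{prop:L-alpha-and-L-p-x}/\cref{cor:symmetry}) for the reverse inclusion. The only difference is expository---you spell out the edge-orientation dichotomy more explicitly than the paper does.
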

\begin{proof}
    By \cref{cor:symmetry}, it suffices to show that each directed cycle in $\overrightarrow{G}_f$ is a directed cycle in $\overrightarrow{G}_{p_f(\bm a)}$. 
    Thus let $(v_0,v_1,\dots, v_{\ell-1},v_\ell=v_0)$ be a directed cycle in $\overrightarrow{G}_f$. Then $\bm a$ satisfies $a_{v_{i}}\leq a_{v_{i-1}}\leq a_{v_i}+1$ for each $i\in\{1,\dots, \ell\}$. This forces $a_{v_0}=a_{v_1}=\dots=a_{v_{\ell-1}}=a_{v_\ell}$. Therefore $(v_0,v_1,\dots, v_{\ell-1},v_\ell=v_0)$ is a directed cycle in $\overrightarrow{G}_{p_f(\bm a)}$. 
\end{proof}

\begin{corollary}
\label{cor:Conn-subset-E}
    We have $\Conn(D_f,\bm 0)\subseteq E_f$.
\end{corollary}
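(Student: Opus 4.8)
The plan is to reduce the inclusion to a statement about the $1$-skeleton $D_f^{(1)}$ and then argue by induction along edge paths starting at $\bm 0$. Since $E_f$ is by definition the induced cube subcomplex of $D_f$ determined by $E_f^{(0)}$, a cell $C$ of $D_f$ belongs to $E_f$ if and only if $C\cap D_f^{(0)}\subseteq E_f^{(0)}$; and a cell $C$ of $D_f$ belongs to $\Conn(D_f,\bm 0)$ if and only if (being connected) it meets $\Conn(D_f,\bm 0)$, that is, if and only if all vertices of $C$ lie in $\Conn(D_f,\bm 0)\cap D_f^{(0)}$. Hence it suffices to prove that every vertex of $D_f^{(1)}$ that can be joined to $\bm 0$ by a path in $D_f^{(1)}$ belongs to $E_f^{(0)}$. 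I would prove this by induction on the length of such a path. The base case is immediate: $\bm 0=(0)_{v\in V(G)}$ trivially satisfies the defining condition of $E_f^{(0)}$.

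For the inductive step, suppose $\bm a=(a_v)_{v\in V(G)}\in E_f^{(0)}$ and $\bm b=(b_v)_{v\in V(G)}\in D_f^{(0)}$ is adjacent to $\bm a$ in $D_f^{(1)}$, so that $\bm b=\bm a+\varepsilon\bm e_u$ for some $u\in V(G)$ and $\varepsilon\in\{\pm 1\}$; I want to conclude $\bm b\in E_f^{(0)}$. The crucial point is that the vertex $u$ cannot lie on any directed cycle of $\overrightarrow{G}_f$. Indeed, if it did, then by \cref{lem:directed-cycles-remain-invariant} it would also lie on a directed cycle in $\overrightarrow{G}_{p_f(\bm a)}$, and so $u$ would have both an incoming and an outgoing arc in $\overrightarrow{G}_{p_f(\bm a)}$; in particular $u$ would be neither a source nor a sink in $\overrightarrow{G}_{p_f(\bm a)}$. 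On the other hand, $\bm b=\bm a+\varepsilon\bm e_u\in D_f^{(0)}$, so by \cref{prop:local-str-of-L} the vertex $u$ is a source in $\overrightarrow{G}_{p_f(\bm a)}$ if $\varepsilon=1$, and a sink in $\overrightarrow{G}_{p_f(\bm a)}$ if $\varepsilon=-1$ --- a contradiction in either case.

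Consequently $u$ does not lie on any directed cycle of $\overrightarrow{G}_f$, so $b_v=a_v$ for every vertex $v$ lying on a directed cycle of $\overrightarrow{G}_f$; since $\bm a\in E_f^{(0)}$, this gives $b_v=0$ for all such $v$, i.e., $\bm b\in E_f^{(0)}$. This closes the induction and yields the desired inclusion. I do not anticipate any real obstacle in carrying this out: the argument is essentially a combination of \cref{lem:directed-cycles-remain-invariant} and \cref{prop:local-str-of-L}, and the only mildly delicate bookkeeping is the reduction to the $1$-skeleton, which uses that a cell of a cube complex is contained in a connected component as soon as a single one of its vertices is.
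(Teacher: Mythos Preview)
Your proposal is correct and follows essentially the same approach as the paper: the paper's proof is a terse one-liner stating that \cref{prop:local-str-of-L} and \cref{lem:directed-cycles-remain-invariant} together imply that the $u$-th coordinate is frozen whenever $u$ lies on a directed cycle in $\overrightarrow{G}_f$, and your argument is precisely the natural unpacking of that sentence (reduction to the $1$-skeleton, induction along edge paths, and the source/sink contradiction).
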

\begin{proof}
    \cref{prop:local-str-of-L,lem:directed-cycles-remain-invariant} imply that, 
    if $u\in V(G)$ is in a directed cycle in $\overrightarrow{G}_f$, then the value of the $u$-th coordinate remain invariant in any connected component in $D_f$. 
    (In the context of combinatorial reconfiguration, one says that $u$ is \emph{frozen} with respect to $f$; cf.~\cite[Lemma~5.1]{Wro20}.
    For example, in the situation of \cref{fig:G-f}, each homomorphism $g\colon G\to C_5$ in $\Conn\bigl(\Hom(G,C_5),f\bigr)$ takes the same value as $f$ at any of the vertices in the directed $5$-cycle in $\overrightarrow{G}_f$.)
\end{proof}

For each $\bm a =(a_v)_{v\in V(G)}\in\Z^{V(G)}$, define the nonnegative integer $\norm{\bm a}$ as $\sum_{v\in V(G)}|a_v|$.

\begin{proposition}
\label{prop:E-alpha-as-Conn}
    For any $\bm a\in E^{(0)}_f$ with $\bm a \neq \bm 0$, there exists $\bm a'\in E^{(0)}_f$ which is adjacent to $\bm a$ and satisfies $\norm{\bm a'}=\norm{\bm a}-1$. 
    In particular, we have $\Conn(D_f,\bm 0)=E_f$. 
\end{proposition}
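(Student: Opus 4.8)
The plan is to deduce the ``in particular'' clause from the first assertion together with \cref{cor:Conn-subset-E}, so the substance is the norm-decreasing step. I would prove that step by moving one unit toward $\bm 0$ at an \emph{extreme} coordinate of $\bm a$. Since $\bm a\neq\bm 0$, either $M:=\max_{v\in V(G)}a_v>0$ or $m:=\min_{v\in V(G)}a_v<0$; I would treat the case $M>0$ in detail, the case $m<0$ being entirely symmetric (with sources replacing sinks and $\bm a+\bm e_u$ replacing $\bm a-\bm e_u$), and then invoke this symmetry rather than repeating the argument.

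So suppose $M>0$ and set $S=\{\,v\in V(G)\mid a_v=M\,\}$, a nonempty finite set. The key point I would establish first is that the subdigraph $\overrightarrow{G}_f[S]$ of $\overrightarrow{G}_f$ induced on $S$ is acyclic: a directed cycle lying inside $S$ would be a directed cycle of $\overrightarrow{G}_f$, on whose vertices the definition of $E_f^{(0)}$ forces every coordinate of $\bm a$ to equal $0$, contradicting $a_v=M>0$. A finite acyclic digraph has a sink, so I may pick a sink $u\in S$ of $\overrightarrow{G}_f[S]$. I would then put $\bm a':=\bm a-\bm e_u$ and check three things. (i) $\bm a'\in D_f^{(0)}$: only defining inequalities of edges at $u$ are affected, and because $u$ is a sink of $\overrightarrow{G}_f[S]$, every out-neighbor $v$ of $u$ in $\overrightarrow{G}_f$ satisfies $v\notin S$ and hence (using $\bm a\in D_f^{(0)}$) $a_v=a_u-1$, while every in-neighbor $v$ satisfies (using $\bm a\in D_f^{(0)}$ and $a_v\le M=a_u$) $a_v=a_u$; in both cases the inequality for $\bm a'$ still holds. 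Equivalently, this computation shows $u$ is a sink in $\overrightarrow{G}_{p_f(\bm a)}$, so $\bm a'\in D_f^{(0)}$ is immediate from \cref{prop:local-str-of-L}. (ii) $\norm{\bm a'}=\norm{\bm a}-1$, since $a_u=M>0$. (iii) $\bm a'\in E_f^{(0)}$: any vertex $v$ on a directed cycle of $\overrightarrow{G}_f$ has $a_v=0\neq M$, so $v\neq u$ and $a'_v=a_v=0$. Since $\bm a'$ is adjacent to $\bm a$ in $D_f^{(1)}$, hence in $E_f^{(1)}$ (as $E_f$ is an induced subcomplex), this proves the first assertion.

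For the last sentence, the inclusion $\Conn(D_f,\bm 0)\subseteq E_f$ is \cref{cor:Conn-subset-E}. Conversely, iterating the first assertion $\norm{\bm a}$ times yields, for every $\bm a\in E_f^{(0)}$, a walk in $E_f^{(1)}$ from $\bm a$ to $\bm 0$; hence $E_f^{(1)}$, and thus the cube complex $E_f$, is connected. A connected subspace of $D_f$ containing $\bm 0$ lies in $\Conn(D_f,\bm 0)$, so $E_f\subseteq\Conn(D_f,\bm 0)$, giving equality. The step I expect to be the main obstacle is the acyclicity of $\overrightarrow{G}_f[S]$: this is exactly where the ``frozen coordinates are $0$'' condition built into $E_f^{(0)}$ is used, and it is what guarantees a sink $u$ inside $S$ at which a norm-decreasing move is available. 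Once $u$ is fixed, checking that $\bm a-\bm e_u$ remains in $E_f^{(0)}$ is a short local verification (or a direct appeal to \cref{prop:local-str-of-L}), and the deduction of $\Conn(D_f,\bm 0)=E_f$ is then formal.
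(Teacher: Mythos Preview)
Your argument is correct and follows the same overall strategy as the paper: locate a vertex $u$ with $a_u>0$ at which $\bm a-\bm e_u$ remains in $E_f^{(0)}$, then deduce connectivity of $E_f$ and combine with \cref{cor:Conn-subset-E}. The technical implementation differs slightly. The paper takes $S=\{v:a_v>0\}$ and works in the digraph $\overrightarrow{G}_{p_f(\bm a)}$, showing that $S$ is closed under out-arcs there and that a sink of $\overrightarrow{G}_{p_f(\bm a)}$ exists in $S$ because any directed cycle in $\overrightarrow{G}_{p_f(\bm a)}$ would, by \cref{lem:directed-cycles-remain-invariant}, also be one in $\overrightarrow{G}_f$. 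You instead take $S=\{v:a_v=M\}$ and work directly in $\overrightarrow{G}_f[S]$, observing that any directed cycle there is already a directed cycle of $\overrightarrow{G}_f$, hence excluded by the definition of $E_f^{(0)}$. Your route is marginally more elementary in that it bypasses \cref{lem:directed-cycles-remain-invariant} and never leaves $\overrightarrow{G}_f$; the paper's route, on the other hand, makes the connection to sinks of $\overrightarrow{G}_{p_f(\bm a)}$ explicit from the outset, which aligns directly with the hypothesis of \cref{prop:local-str-of-L}. Either way the verification that $\bm a-\bm e_u\in D_f^{(0)}$ comes down to the same local check (and your remark that your computation is equivalent to showing $u$ is a sink in $\overrightarrow{G}_{p_f(\bm a)}$ is exactly right).
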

\begin{proof}
    Since we have $\Conn(D_f,\bm 0)\subseteq E_f\subseteq D_f$ by \cref{cor:Conn-subset-E}, in order to show $\Conn(D_f,\bm 0)=E_f$, it suffices to show that $E_f$ is connected, or equivalently, that the graph $E_f^{(1)}$ is connected. Therefore the second assertion indeed follows from the first.

    Take any $\bm a\in E_f^{(0)}$ such that $\bm a\neq \bm 0$. 
    Suppose that there exists $v\in V(G)$ with $a_v > 0$; the case where there exists $v\in V(G)$ with $a_v < 0$ is similar. 
    Observe that if $(u,v)$ is an arc in the digraph $\overrightarrow{G}_{p_f(\bm a)}$, then we have $a_u\leq a_v$. Indeed, $(u,v)\in A(\overrightarrow{G}_{p_f(\bm a)})$ means $uv\in E(G)$ and 
    \[
    f(v)-f(u) + [2a_v -2a_u]_k=[1]_k
    \]
    in $\Z/k\Z$.
    Since $\bm a\in E_f^{(0)}$ implies that $|a_v - a_u| \le 1$, 
    if $f(v)-f(u)=[1]_k$, then we have $a_v-a_u=0$, and if $f(v)-f(u)=-[1]_k$, then we have $a_v-a_u=1$ (recall that $k \ge 3$ and $k \neq 4$). 
    
    Let $S\subseteq V(G)$ be the set of vertices $v\in V(G)$ such that $a_v>0$. What we have just shown implies that if $u\in S$ and $(u,v)\in A(\overrightarrow{G}_{p_f(\bm a)})$, then $v\in S$. It follows that there exists a vertex $u\in S$ which is a sink in $\overrightarrow{G}_{p_f(\bm a)}$; otherwise, we would be able to find a directed cycle in $\overrightarrow{G}_{p_f(\bm a)}$ contained in $S$, which contradicts $\bm a\in E_f^{(0)}$ by \cref{lem:directed-cycles-remain-invariant}. Let $u\in S$ be such a vertex. 
    Then we have $\bm a -\bm e_u\in E_f^{(0)}$ by \cref{prop:local-str-of-L}. 
    Clearly $\bm a -\bm e_u$ and $\bm a$ are adjacent and  $\norm{\bm a -\bm e_u} = \norm{\bm a}-1$. 
\end{proof}

Now we are ready to show that $\Conn(D_f,\bm 0)=E_f$ is contractible. 

\begin{lemma}
\label{lem:maximal-cube-below}
    Let $\bm a\in D_f^{(0)}$. Define $S_-=\{\,v\in V(G)\mid \text{$a_v<0$ and $\bm a+\bm e_v\in D_f^{(0)}$}\,\}$ and $S_+=\{\,v\in V(G)\mid \text{$a_v>0$ and $\bm a-\bm e_v\in D_f^{(0)}$}\,\}$. Then, for each $J_-\subseteq S_-$ and $J_+\subseteq S_+$, we have $\bm a+\sum_{v\in J_-} \bm e_v -\sum_{v\in J_+}\bm e_v\in D_f^{(0)}$.
\end{lemma}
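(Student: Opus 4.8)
The plan is to check directly that the vector $\bm b:=\bm a+\sum_{v\in J_-}\bm e_v-\sum_{v\in J_+}\bm e_v$ satisfies every defining inequality of $D_f^{(0)}$. Write $\bm b=(b_w)_{w\in V(G)}$, so that $b_w=a_w+\delta_w$, where $\delta_w=1$ for $w\in J_-$, $\delta_w=-1$ for $w\in J_+$, and $\delta_w=0$ otherwise. Since $J_-\subseteq S_-$ and $J_+\subseteq S_+$ are disjoint (the former consisting of vertices with negative, the latter of vertices with positive, $\bm a$-coordinate), $\delta_w$ is well defined and lies in $\{-1,0,1\}$, with $\delta_w=1$ only if $a_w<0$ and $\delta_w=-1$ only if $a_w>0$. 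Fix an arc $(u,v)\in A(\overrightarrow{G}_f)$; the goal is to show $b_v\le b_u\le b_v+1$. Since $\bm a\in D_f^{(0)}$ we have $a_u-a_v\in\{0,1\}$, and a one-line rearrangement reduces the desired inequality to: $\delta_u-\delta_v\in\{0,1\}$ when $a_u=a_v$, and $\delta_v-\delta_u\in\{0,1\}$ when $a_u=a_v+1$.

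I would then treat the two cases separately, using \cref{eqn:p-alpha-a-cases} to read off the orientation of the edge $uv$ in $\overrightarrow{G}_{p_f(\bm a)}$. If $a_u=a_v$, then $(u,v)\in A(\overrightarrow{G}_{p_f(\bm a)})$, so $u$ is not a sink and $v$ is not a source in $\overrightarrow{G}_{p_f(\bm a)}$; by \cref{prop:local-str-of-L} this gives $\bm a-\bm e_u\notin D_f^{(0)}$ and $\bm a+\bm e_v\notin D_f^{(0)}$, whence $u\notin S_+$ and $v\notin S_-$, i.e.\ $\delta_u\in\{0,1\}$ and $\delta_v\in\{-1,0\}$. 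Then $\delta_u-\delta_v\in\{0,1,2\}$, and the value $2$ (which would mean $\delta_u=1$ and $\delta_v=-1$) is impossible, since it would force both $a_u<0$ and $a_v>0$ while $a_u=a_v$. The case $a_u=a_v+1$ is symmetric: now $(v,u)\in A(\overrightarrow{G}_{p_f(\bm a)})$, so $v$ is not a sink and $u$ is not a source, giving $\delta_v\in\{0,1\}$ and $\delta_u\in\{-1,0\}$, and the bad value $\delta_v-\delta_u=2$ would force $a_v<0$ and $a_u>0$, contradicting $a_u=a_v+1\le 0$.

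There is no serious obstacle here: the argument is a finite case check relying only on the definition of $D_f^{(0)}$, on \cref{prop:local-str-of-L}, and on \cref{eqn:p-alpha-a-cases}, with no topological input. The one point that requires care is the bookkeeping: translating the conditions $u\notin S_+$ and $v\notin S_-$ into source/sink statements in $\overrightarrow{G}_{p_f(\bm a)}$ via \cref{prop:local-str-of-L}, and keeping track of the sign constraints $a_w<0$ on $J_-$ and $a_w>0$ on $J_+$, since these are exactly what eliminate the single problematic configuration in each of the two cases.
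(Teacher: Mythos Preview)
Your argument is correct. The two case checks go through exactly as you describe: the use of \cref{eqn:p-alpha-a-cases} to read off the orientation of $uv$ in $\overrightarrow{G}_{p_f(\bm a)}$, followed by \cref{prop:local-str-of-L} to exclude $u$ from $S_+$ and $v$ from $S_-$ (or vice versa), and finally the sign constraints on $J_\pm$ to kill the residual value $2$, is a valid chain of deductions.

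The paper takes a different, slightly more economical route: rather than verifying the $uv$-th inequality for $\bm b$ directly, it first proves the structural fact that $S_-\cup S_+$ is an \emph{independent set} of $G$. This is done purely by manipulating the defining inequalities of $D_f^{(0)}$ (no appeal to \cref{prop:local-str-of-L} or to $\overrightarrow{G}_{p_f(\bm a)}$): if $u,v\in S_-$ were adjacent with $(u,v)\in A(\overrightarrow{G}_f)$, the $uv$-th inequalities for $\bm a+\bm e_u$ and $\bm a+\bm e_v$ combine to give $a_u+1\le a_u$; the case $u,v\in S_+$ is dual; and $u\in S_-$, $v\in S_+$ adjacent is impossible since $|a_u-a_v|\le 1$ while $a_u<0<a_v$. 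Once independence is known, each defining inequality for $\bm b$ involves at most one modified coordinate and therefore coincides with the corresponding inequality for $\bm a$, $\bm a+\bm e_w$, or $\bm a-\bm e_w$, all of which lie in $D_f^{(0)}$ by hypothesis. Your argument implicitly rediscovers this independence (your case analysis in fact shows that for each edge at most one endpoint lies in $S_-\cup S_+$), but reaches it via a detour through \cref{prop:local-str-of-L}; the paper's version is more self-contained, while yours leverages machinery already in place.
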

\begin{proof}
    It suffices to show that $S_-\cup S_+\subseteq V(G)$ is an independent set of $G$ (i.e., no two vertices in $S_-\cup S_+\subseteq V(G)$ are adjacent), because then all defining inequalities of $D_f^{(0)}$ for $\bm a+\sum_{v\in J_-} \bm e_v -\sum_{v\in J_+}\bm e_v$ are identical to those for one of $\bm a$, $\bm a + \bm e_v$ with $v\in S_-$, or $\bm a - \bm e_v$ with $v\in S_+$, and hence are satisfied. 
    
    First suppose $u,v\in S_-$ and $uv\in E(G)$. Without loss of generality, we may assume that $(u,v)\in A(\overrightarrow{G}_f)$. Then the $uv$-th defining inequalities of $D^{(0)}_f$ for $\bm a +\bm e_u$ and $\bm a + \bm e_v$ assert $a_v\leq a_u+1\leq a_v+1$ and $a_v+1\leq a_u\leq a_v+2$, respectively. But they imply $a_u+1\leq a_u$, a contradiction. 
    Similarly, the existence of $u,v\in S_+$ with $uv\in E(G)$ leads to a contradiction.
    
    Finally, observe that if $u\in S_-$ and $v\in S_+$, then we cannot have $uv\in E(G)$, since otherwise $\bm a$ cannot satisfy the $uv$-th defining inequalities of $D^{(0)}_f$ (which entail $|a_v-a_u|\leq 1$).
\end{proof}

Let $C=[0,1]^d$ be a cube and $\bm a\in \{0,1\}^d$ a vertex of $C$. We define $\partial_{\bm a} C\subseteq C$ as $\bigcup_{i=1}^{d}[0,1]^{i-1}\times \{1-a_i\}\times [0,1]^{d-i}$. 
The cube $C$ strongly deformation retracts onto $\partial_{\bm a}C$.
For example, when $\bm a=\bm 1=(1,\dots, 1)$, we have  $\partial_{\bm 1}C=\bigcup_{i=1}^d[0,1]^{i-1}\times\{0\}\times[0,1]^{d-i}$ (see \cref{fig:flag}) and there is a linear homotopy $H\colon C\times [0,1]\to C$ defined by
\[
H(\bm c,t)=\bm{c}-t\min\{c_1,\dots,c_d\}\bm 1,
\]
giving rise to a strong deformation retract of $C$ onto $\partial_{\bm 1}C$. 

\begin{figure}[h]
    \centering
		\begin{tikzpicture}[scale=0.4]
			\coordinate(000) at (0,0);
			\coordinate(100) at (5,0);
			\coordinate(010) at (2,2);
			\coordinate(110) at (7,2);
			\coordinate(001) at (0,5);
			\coordinate(101) at (5,5);
			\coordinate(011) at (2,7);
			\coordinate(A01) at (2,5);
			\coordinate(A02) at (5,2);
			
			\fill [gray!20] (A01) -- (011) -- (001) -- cycle;
			\fill [gray!40] (A01) -- (010) -- (A02) -- (101) --cycle;
			\fill [gray!20] (A02) -- (100) -- (110) -- cycle;
			\fill [gray!60] (A01) -- (001) -- (000) -- (100) -- (A02) -- (010) -- cycle;

			\draw [thick] (000) -- (100) -- (110) -- (010) -- cycle;
			\draw [thick] (000) -- (010) -- (011) -- (001) -- cycle;
			\draw [thick] (000) -- (001) -- (101) -- (100) -- cycle;
		\end{tikzpicture}
    \caption{$\partial_{\bm 1}C$ with $d=3$.}
    \label{fig:flag}
\end{figure}

\begin{proposition}
\label{prop:contractible}
The topological space $E_f$ is contractible.
\end{proposition}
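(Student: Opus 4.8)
The plan is to realise $E_f$ as an increasing union of finite‑level subcomplexes and strip off one level at a time. For each $n\geq 0$ let $E_f^{\leq n}$ be the induced cube subcomplex of $E_f$ on $\{\bm a\in E_f^{(0)}\mid \norm{\bm a}\leq n\}$. Since $\bm 0\in E_f^{(0)}$ and every cube has finitely many vertices, $E_f=\bigcup_{n\geq 0}E_f^{\leq n}$ is an increasing union of subcomplexes with $E_f^{\leq 0}=\{\bm 0\}$ a single point. The heart of the argument will be that $E_f^{\leq n}$ strongly deformation retracts onto $E_f^{\leq n-1}$ for every $n\geq 1$. Granting this, $E_f$ is contractible: any map of a sphere into $E_f$ has image in some $E_f^{\leq n}$ by compactness, and $E_f^{\leq n}\simeq E_f^{\leq 0}$ is a point, so $E_f$ is weakly contractible; being a CW complex, it is contractible by Whitehead's theorem. (Alternatively one cites the standard fact that an increasing union of CW complexes, each a deformation retract of the next, deformation retracts onto its first term.)

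To retract $E_f^{\leq n}$ onto $E_f^{\leq n-1}$, the first step is a combinatorial analysis. A cube of $E_f$ lying in $E_f^{\leq n}$ but not in $E_f^{\leq n-1}$ has a vertex $\bm a$ with $\norm{\bm a}=n$, and then $\bm a\neq\bm 0$. For such an $\bm a$, \cref{lem:maximal-cube-below} furnishes the ``maximal cube below $\bm a$'', namely the cube $C_{\bm a}$ with vertex set $\{\bm a+\sum_{v\in J_-}\bm e_v-\sum_{v\in J_+}\bm e_v\mid J_-\subseteq S_-,\ J_+\subseteq S_+\}$, where $S_-$ (resp.\ $S_+$) is the set of $v$ with $a_v<0$ and $\bm a+\bm e_v\in D_f^{(0)}$ (resp.\ $a_v>0$ and $\bm a-\bm e_v\in D_f^{(0)}$). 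One checks that $C_{\bm a}$ is in fact a cube of $E_f$ (the coordinates perturbed all have $a_v\neq 0$, hence lie on no directed cycle of $\overrightarrow{G}_f$, so the extra defining condition of $E_f^{(0)}$ is preserved), that $\bm a$ is the unique vertex of $C_{\bm a}$ of norm $n$ (any other vertex has strictly smaller norm), that $S_-\cup S_+\neq\emptyset$ so $\dim C_{\bm a}\geq 1$ (as in the proof of \cref{prop:E-alpha-as-Conn}), and, crucially, that every cube of $E_f^{\leq n}$ having $\bm a$ as a vertex is a face of $C_{\bm a}$: an edge of such a cube at $\bm a$ points in a direction $\varepsilon\bm e_w$ which does not increase $\norm{\cdot}$, forcing $a_w$ and $\varepsilon$ to have opposite signs, i.e.\ $w\in S_-\cup S_+$ with the matching sign, so all its vertices lie among those of $C_{\bm a}$. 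Consequently $E_f^{\leq n}=E_f^{\leq n-1}\cup\bigcup_{\norm{\bm a}=n}C_{\bm a}$; moreover, writing $\partial_{\bm a}C_{\bm a}$ for the union of the facets of $C_{\bm a}$ not containing $\bm a$ (this is $\partial_{\bm 1}C_{\bm a}$ in coordinates placing $\bm a$ at the corner $\bm 1$), one has $C_{\bm a}\cap E_f^{\leq n-1}=\partial_{\bm a}C_{\bm a}$, and for $\bm a\neq\bm b$ the intersection $C_{\bm a}\cap C_{\bm b}$ is a common face of these cubes containing neither $\bm a$ nor $\bm b$, hence contained in $\partial_{\bm a}C_{\bm a}$ (and in $\partial_{\bm b}C_{\bm b}$).

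With this picture the deformation retraction is built by hand: take the identity (in $t$) on $E_f^{\leq n-1}$, and on each $C_{\bm a}$ take the explicit linear homotopy described just before this proposition, realising $\bm a$ as the corner $\bm 1$ of $[0,1]^d$ with $d=|S_-|+|S_+|$ and using $H(\bm c,t)=\bm c-t\min\{c_1,\dots,c_d\}\,\bm 1$, which retracts $C_{\bm a}$ onto $\partial_{\bm a}C_{\bm a}$ while fixing $\partial_{\bm a}C_{\bm a}$ pointwise. Since each local homotopy is the identity on all overlaps $C_{\bm a}\cap C_{\bm b}$ and on $C_{\bm a}\cap E_f^{\leq n-1}$ — all of which sit inside $\partial_{\bm a}C_{\bm a}$ — the pieces agree where they meet, and as $\{E_f^{\leq n-1}\}\cup\{C_{\bm a}\}_{\norm{\bm a}=n}$ is a closed cover of $E_f^{\leq n}$ compatible with its CW structure, the glued map is continuous. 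At time $1$ it maps $E_f^{\leq n}$ into $E_f^{\leq n-1}\cup\bigcup_{\bm a}\partial_{\bm a}C_{\bm a}=E_f^{\leq n-1}$, giving the required strong deformation retraction.

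The main obstacle is the bookkeeping of the middle paragraph: verifying that the maximal cubes $C_{\bm a}$ attached to the norm-$n$ vertices exactly exhaust the part of $E_f^{\leq n}$ outside $E_f^{\leq n-1}$ and that distinct such cubes (and each such cube with $E_f^{\leq n-1}$) meet only inside $E_f^{\leq n-1}$, so that the local linear homotopies patch together into a global one. Once that is in place, the passage to $E_f$ itself is routine, either by the cited ``increasing union of deformation retracts'' principle or by the homotopy-group argument with Whitehead's theorem.
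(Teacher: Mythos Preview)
Your proof is correct and follows essentially the same strategy as the paper's: filter $E_f$ by the norm levels $E_f^{\leq n}$, use \cref{lem:maximal-cube-below} and \cref{prop:E-alpha-as-Conn} to identify at each top-level vertex $\bm a$ a maximal cube $C_{\bm a}$ of positive dimension, retract each $C_{\bm a}$ onto $\partial_{\bm a}C_{\bm a}$ via the explicit linear homotopy, and conclude by compactness plus Whitehead's theorem. Your version supplies considerably more detail on why the local retractions glue (the overlap analysis for $C_{\bm a}\cap C_{\bm b}$ and $C_{\bm a}\cap E_f^{\leq n-1}$), which the paper leaves implicit.
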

\begin{proof}
For each nonnegative integer $\ell$, define $E_{f,\ell}^{(0)}= \{\,\bm a\in E^{(0)}_f\mid \norm{\bm a}\leq\ell \,\}$ and define $E_{f,\ell}$ as the induced cube subcomplex of $E_f$ with vertex set $E_{f,\ell}^{(0)}$. We construct a (strong) deformation retract of $E_{f,\ell+1}$ onto $E_{f,\ell}$ for each nonnegative integer $\ell$.
This implies that each $E_{f,\ell}$ is homotopy equivalent to $E_{f,0}=\{\bm 0\}$.
Since every compact subset of $E_f$ is contained in some $E_{f,\ell}$, it follows that all the higher homotopy groups of $E_f$ vanish. Hence  $E_f$ is contractible by Whitehead's theorem (see, e.g.,~\cite[Theorem~4.5]{Hat02}).

Let $\ell$ be a nonnegative integer and take any $\bm a\in E_{f,\ell+1}^{(0)}$ with $\norm{\bm a}=\ell+1$. (If there is no such $\bm a$, then we have $E_{f,\ell}=E_{f,\ell+1}$.) Then by \cref{lem:maximal-cube-below}, there exists a largest cube $C$ in $E_{f,\ell+1}$ having $\bm a$ as a vertex, and we can apply a strong deformation retract of $C$ onto $\partial_{\bm a} C$. Notice that $\partial_{\bm a} C\subseteq E_{f,\ell}$ holds since the dimension of $C$ is greater than $0$ by the first statement of \cref{prop:E-alpha-as-Conn}. 
Performing this deformation for each $\bm a\in E_{f,\ell+1}^{(0)}$ with $\norm{\bm a}=\ell+1$, we obtain the required (strong) deformation retract of $E_{f,\ell+1}$ onto $E_{f,\ell}$.
\end{proof}

%%%
\section{Determining the homotopy type}
\label{sec:Determining}
We can now determine the homotopy type of $\Conn\bigl(\Hom(G,C_k),f\bigr)$.

\begin{theorem}
\label{thm:homotopy-type-of-Hom}
    If the digraph $\overrightarrow{G}_f$ contains a directed cycle, then $\Conn\bigl(\Hom(G,C_k),f\bigr)$ is contractible. Otherwise, $\Conn\bigl(\Hom(G,C_k),f\bigr)$ is homotopy equivalent to a circle.
\end{theorem}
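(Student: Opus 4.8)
The plan is to realize $X:=\Conn\bigl(\Hom(G,C_k),f\bigr)$ as an Eilenberg--MacLane space whose fundamental group we can read off from the universal cover built in \cref{sec:Construction}. By \cref{prop:p-covering} together with \cref{prop:E-alpha-as-Conn}, the map $p_f\colon E_f\to X$ is a covering map, and $E_f$ is contractible by \cref{prop:contractible}; hence $p_f$ is the universal cover of $X$. Consequently $\pi_n(X)=0$ for $n\geq 2$ and $\pi_1(X)$ is isomorphic to the deck transformation group $\Aut(p_f)$, so $X$ is a $K\bigl(\Aut(p_f),1\bigr)$. Everything then reduces to computing $\Aut(p_f)$: if it is trivial, then $X$ is a weakly contractible regular CW complex, hence contractible; if it is infinite cyclic, then $X$ is a $K(\Z,1)$, hence homotopy equivalent to $S^1$ by the uniqueness of the homotopy type of an Eilenberg--MacLane space.

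To compute $\Aut(p_f)$, I would show that every deck transformation is a translation by an integer multiple of $k'\bm 1$, where $\bm 1=(1)_{v\in V(G)}$. A deck transformation $\phi$ of $p_f$ permutes the cubes of $E_f$ (since $p_f$ maps each cube of $E_f$ homeomorphically onto a cube), hence its vertices, preserving adjacency; and since $p_f=r_f\circ q_f$ with $r_f$ injective (\cref{prop:r-iso-onto-union-of-conn-comp}), the identity $p_f\circ\phi=p_f$ forces $q_f\circ\phi=q_f$, so $\phi(\bm a)=\bm a+m(\bm a)k'\bm 1$ for a unique $m(\bm a)\in\Z$ and every $\bm a\in E_f^{(0)}$, by definition of $q_f$ as the quotient by the $k'\Z$-action. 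The function $\bm a\mapsto m(\bm a)$ is constant: if $\bm a,\bm b\in E_f^{(0)}$ are adjacent then so are $\phi(\bm a),\phi(\bm b)$, yet $\norm{(\bm b-\bm a)+(m(\bm b)-m(\bm a))k'\bm 1}\geq |V(G)|\,k'-1\geq 5$ whenever $m(\bm a)\neq m(\bm b)$, contradicting adjacency; since $E_f^{(1)}$ is connected (\cref{prop:E-alpha-as-Conn}), $m$ is a constant $m_0$ and $\phi$ is translation by $m_0k'\bm 1$. Conversely, translation by $mk'\bm 1$ is a cube-complex automorphism of $\R^{V(G)}$ that preserves $D_f^{(0)}$, and it satisfies $p_f(\bm a+mk'\bm 1)=p_f(\bm a)$ because $[2k']_k=[0]_k$ (the commutativity of \cref{eqn:comm-triangle-k'} with $m=k'$). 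Hence $\Aut(p_f)$ is exactly the group of $m\in\Z$ such that translation by $mk'\bm 1$ maps $E_f^{(0)}$ into itself.

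It then remains to split into the two cases using $E_f^{(0)}=D_f^{(0)}\cap\{(a_v)_v\mid a_v=0\text{ for each }v\text{ lying on a directed cycle of }\overrightarrow G_f\}$. If $\overrightarrow G_f$ contains a directed cycle, fix a vertex $w$ on it; then $a_w=0$ for every $\bm a\in E_f^{(0)}$, so translation by $mk'\bm 1$ preserves $E_f^{(0)}$ only if $mk'=0$, i.e.\ $m=0$ (recall $k'\geq 3$ since $k\neq 4$). Thus $\Aut(p_f)$ is trivial and $X$ is contractible. If $\overrightarrow G_f$ has no directed cycle, the constraint is vacuous, so $E_f^{(0)}=D_f^{(0)}$, which is invariant under translation by $k'\bm 1$; hence $\Aut(p_f)\cong\Z$ and $X\simeq S^1$. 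The step I expect to require the most care is the middle one---verifying that every deck transformation is a translation by a multiple of $k'\bm 1$ and that such translations act without fixed points once a directed cycle is present---where the standing hypotheses $|V(G)|\geq 2$ and $k\neq 4$ (hence $k'\geq 3$) are essential; but this is elementary granted the concrete factorization $p_f=r_f\circ q_f$ from \cref{sec:Construction}.
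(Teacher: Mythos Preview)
Your proof is correct and follows essentially the same strategy as the paper: realize $\Conn\bigl(\Hom(G,C_k),f\bigr)$ as a $K\bigl(\Aut(p_f),1\bigr)$ and identify every deck transformation with a translation by a multiple of $k'\bm 1$. The only organizational difference is that the paper handles the directed-cycle case by directly proving $p_f$ is injective on $E_f^{(0)}$ (using that the set $\{v:a_v=b_v\}$ is nonempty and propagates along edges via \cref{eqn:p-alpha-a-cases}), whereas you fold that case into the same $\Aut(p_f)$ computation and read off $\Aut(p_f)=1$ from the constraint $a_w=0$; your use of the factorization $p_f=r_f\circ q_f$ to get $q_f\circ\phi=q_f$ is a neat shortcut, but the underlying argument is the same.
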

\begin{proof}
    First suppose that $\overrightarrow{G}_f$ contains a directed cycle. We claim that $p_f\colon E_f\to \Conn\bigl(\Hom(G,C_k),f\bigr)$ is a homeomorphism in this case.
    To see this, it suffices to show that if $\bm a,\bm b\in E_f^{(0)}$ satisfy $p_f(\bm a)=p_f(\bm b)$, then $\bm a= \bm b$. Let $S\subseteq V(G)$ be the set of all vertices $v\in V(G)$ with $a_v=b_v$. $S$ is nonempty since it contains all directed cycles in $\overrightarrow{G}_f$. 
    Moreover, if we have $uv\in E(G)$ and $a_u=b_u$, then $a_v=b_v$ holds.
    This is because $a_v$ is determined by $a_u$ and $p_f(\bm a)$ by \cref{eqn:p-alpha-a-cases}, and similarly for $b_v$.
    Therefore we have $S=V(G)$ by the connectivity of $G$. It follows that $\Conn\bigl(\Hom(G,C_k),f\bigr)$ is contractible since $E_f$ is. 

    Next suppose that $\overrightarrow{G}_f$ does not contain a directed cycle. In this case, we have $E_f=D_f$. We claim that the fundamental group of $\Conn\bigl(\Hom(G,C_k),f\bigr)$ is isomorphic to the additive group $\Z$ of integers. Since the fundamental group of $\Conn\bigl(\Hom(G,C_k),f\bigr)$ is isomorphic to the automorphism group $\Aut(p_f)$ of the universal cover $p_f\colon D_f\to \Conn\bigl(\Hom(G,C_k),f\bigr)$ (see, e.g.,~\cite[Proposition~1.39]{Hat02}), 
    we show that the latter is isomorphic to $\Z$. 
    We obtain a group homomorphism $h\colon \Z\to \Aut(p_f)$ by mapping each $m\in\Z$ to the automorphism $mk'\cdot(-)\colon D_f\to D_f$, where $k'$ is the integer defined in \cref{sec:Construction}. 
    Since the injectivity of $h$ is clear, it suffices to show that $h$ is surjective. 
    Let $\sigma\in \Aut(p_f)$. For any $\bm a\in D_f^{(0)}$ and any $v\in V(G)$, we can write $\sigma(\bm a)_v= a_v+m(\bm a,v)k'$ for some $m(\bm a,v)\in\Z$. It suffices to show that $m(\bm a,v)$ is independent of $\bm a$ and $v$. 
    We have $m(\bm a,u)=m(\bm a, v)$ whenever $uv\in E(G)$ because we must have $|a_v-a_u|\leq 1$ and $|\sigma(\bm a)_v-\sigma(\bm a)_u|\leq 1$, while $k'\geq 3$. We have $m(\bm a,v)=m(\bm b,v)$ whenever $\bm a,\bm b\in D_f^{(0)}$ are adjacent, because we have $|b_v-a_v|\leq 1$ and $|\sigma(\bm b)_v-\sigma(\bm a)_v|\leq 1$, while $k'\geq 3$. 
    Therefore $\Conn\bigl(\Hom(G,H),f\bigr)$ is an Eilenberg--MacLane space $K(\Z,1)$, and hence is homotopy equivalent to a circle. 
\end{proof}

\begin{remark}
More precisely, by a strong form of Whitehead's theorem found in \cite[Theorem~4.5]{Hat02}, $\Hom(G,C_k)$ admits a strong deformation retract onto some points and embedded circles.
\end{remark}

\begin{proof}[Proof of \cref{thm:HomGCk}]
    Combine \cref{prop:square,prop:isolated,thm:homotopy-type-of-Hom}.
\end{proof}

\end{document}